\renewcommand\theenumi{\roman{enumi}}
\newtheoremstyle{citing}
{}{}
{\itshape}
{}{\bfseries}
{.}
{ }
{\thmnote{#3}}
\theoremstyle{plain}
\newtheorem{thm}{Theorem}[section]
\newtheorem{corol}[thm]{Corollary}
\newtheorem{lemma}[thm]{Lemma}
\newtheorem{prop}[thm]{Proposition}
\newtheorem{remark}[thm]{Remark}
\theoremstyle{definition}
\newtheorem{defn}[thm]{Definition}
\theoremstyle{citing}
\newcommand{\vectwo}[2]{\left[\begin{array}{c} #1 \\ #2 \end{array}\right]}
\newcommand{\inv}[1]{\frac{1}{ #1 }}
\newcommand{\n}{\|}
\newcommand{\maxsym}{\vee}
\newcommand{\minsym}{\wedge}
\newcommand{\E}{\mathbb{E}}
\newcommand{\R}{\mathbb{R}}
\newcommand{\N}{\mathbb{N}}
\renewcommand{\P}{\mathbb{P}}
\newcommand{\mL}{\mathcal{L}}
\newcommand{\F}{\mathscr{F}}
\title{Vector-valued stochastic delay equations - a semigroup approach}
\author{Sonja Cox and Mariusz G\'orajski}
\address{Sonja Cox, Delft Institute of Applied Mathematics, Delft University of Technology, P.O. Box 5031, 2600 GA Delft, The Netherlands; e-mail: \url{s.g.cox@tudelft.nl}}
\address{Mariusz G\'orajski,
Faculty of Mathematics and Computer Science
University of  \L \'od\'z, Banacha 22, 90-238 \L \'od\'z, Poland; Department of Econometrics,
University of  \L \'od\'z,
Rewolucji 1905 r. 41/43, 90-214 \L \'od\'z, Poland; e-mail: \url{mariuszg@math.uni.lodz.pl}}
\keywords{Stochastic partial differential equations with finite delay, stochastic Cauchy
problem, \textsc{umd} Banach spaces, type 2 Banach spaces}
\begin{document}
\begin{abstract}
Let $E$ be a type 2 \textsc{umd} Banach space, $H$ a Hilbert space and let $p\in [1,\infty)$. Consider the following stochastic delay equation in $E$:
\begin{align}\label{SDE1}
\left\{\begin{array}{rll} 
dX(t)&= AX(t) + C X_t + B(X(t),X_t)dW_H(t),& t>0;\\
X(0)&=x_0;\\
X_0&=f_0, 
\end{array}\right. \tag{SDE}
\end{align}
where $A:D(A)\subset E\rightarrow E$ is the generator of a $C_0$-semigroup. The operator $C\in \mL(W^{1,p}(-1,0;E),E)$ is given by a Riemann-Stieltjes integral, and $B:E\times L^p(-1,0;E)\rightarrow \gamma(H,E)$ is a Lipschitz function. Moreover $W_H$ is an $H$-cylindrical Brownian motion adapted to $(\F_t)_{t\geq 0}$ and $x_0\in L^2(\F_0,E)$, $f_0\in L^2(\F_0, L^p(-1,0;E))$. We prove that a solution to \eqref{SDE1} is equivalent to a solution to the corresponding stochastic Cauchy problem, and use this to prove the existence, uniqueness and continuity of a solution to \eqref{SDE1}.
\end{abstract}
\maketitle

AMS 2000 subject classification: 34K50, 60H30, 47D06
\section{Introduction} 
Let $E$ be a type 2 \textsc{umd} Banach space and let $H$ be a Hilbert space. Consider the following stochastic delay equation in $E$:
\begin{align}\label{SDE}
\left\{\begin{array}{rll} 
dX(t)&= AX(t) + C X_t + B(X(t),X_t)dW_H(t),& t>0;\\
X(0)&=x_0;\\
X_0&=f_0, 
\end{array}\right. \tag{SDE}
\end{align}
where for a strongly measurable function $x:[-1,\infty)\rightarrow E$ and $t\geq 0$ we define $x_t:[-1,0]\rightarrow E$ by
\begin{align*}
x_t(s):= x(t+s), \quad s\in [-1,0].
\end{align*}
We assume that $A:D(A)\subset E\rightarrow E$ is closed, densely defined and linear, and generates a $C_0$-semigroup. Define $\mathcal{E}^p(E):=E\times L^p(-1,0;E)$. We assume that $C\in \mL(W^{1,p}(-1,0;E),E)$ for some $p\in [1,\infty)$, and that $B:\mathcal{E}^p(E)\rightarrow \gamma(H,E)$ is a Lipschitz function. Here $\gamma(H,E)$ is the space of $\gamma$-radonifying operators from $H$ to $E$, see Section \ref{s:stochint} below. Moreover, $W_H$ is an $H$-cylindrical Brownian motion on a given probability space $(\Omega, (\F_t)_{t\geq 0},\F,\P)$. The initial value $[x_0, f_0]$ is assumed to be in $L^2(\F_0,\mathcal{E}^p(E))$.\par
Recall that \textsc{umd} stands for unconditional martingale difference sequences; the class of \textsc{umd} Banach spaces includes Hilbert spaces and $L^p$ spaces for $p\in (1,\infty)$. The type of a Banach space is defined in terms of randomized sequences; see Section \ref{s:SDE} below. We note that Hilbert spaces have type $2$ and $L^p$-spaces with $p\in [1,\infty)$ have type $\min\{p,2\}$.\par
We follow the semigroup approach to the delay equation as given in the monograph of Batkai and Piazzera \cite{batkaiPiazzera}. This forces us to assume in addition that $C$ is given by the Riemann-Stieltjes integral
\begin{align*}
Cf:= \int_{-1}^{0} fd\eta,
\end{align*}
where $\eta: [-1,0]\rightarrow \mathcal{L}(E)$ is of bounded variation. This defines an element of $\mL(W^{1,p}(-1,0;E),E)$ by the Sobolev embedding. (One may allow for more general $C\in \mL(W^{1,p}(-1,0;E),E)$; it suffices for $C$ to satisfy the conditions of Theorem 3.26 in \cite{batkaiPiazzera}. The Riemann-Stieltjes integral is the most important example of such a $C$.)\par

One can define a closed operator $\mathcal{A}$ on $\mathcal{E}^p(E)$ by 
\begin{align}\label{opA}
D(\mathcal{A})&=\{[x,f]\in D(A)\times W^{1,p}(-1,0;E)\,:\, f(0)=x\}; \notag \\
\mathcal{A}&=\left[\begin{array}{cc} A & C \\ 0 & \frac{d}{dt} \end{array}\right].
\end{align}
This operator generates a $C_0$-semigroup $(\mathcal{T}(t))_{t\geq 0}$ on $\mathcal{E}^p(E)$ (see \cite{batkaiPiazzera}, Theorem 3.29)
and the stochastic delay equation can be rewritten as a stochastic Cauchy problem in $\mathcal{E}^p(E)$ given by
\begin{align}\label{SDCP}
\left\{ \begin{array}{rll} dY(t)&=\mathcal{A}Y(t)dt + \mathcal{B}(Y(t))dW_H(t), &t\geq 0;\\
Y(0)&=\vectwo{x_0}{f_0}, \end{array}\right. \tag{SDCP}
\end{align}
where $\mathcal{B}(Y(t)):=[B(Y(t)),0]^T$.
\par
The approach we take is to prove existence, uniqueness and continuity of a solution to the stochastic Cauchy problem \eqref{SDCP} and then translate these results to corresponding results for the stochastic delay equation \eqref{SDE}. The monograph by Da Prato and Zabczyk \cite{DaPratoZabczyk} gives an extensive treatment of the stochastic Cauchy problem in Hilbert spaces. The stochastic Cauchy problem in Banach spaces has been considered in the work by Brze\'zniak \cite{brze95} and Van Neerven, Veraar and Weis \cite{vanNeervenVeraarWeis_SEEinUMD}, however, they both consider the case that $\mathcal{A}$ generates an analytic semigroup. Nevertheless their approach is a valuable starting point for studying \eqref{SDCP}.\par
Following the approach of the above mentioned authors we consider the following variation of constants formula:
\begin{align}\label{voc}
Y(t) = \mathcal{T}(t)Y(0) + \int_{0}^{t}\mathcal{T}(t-s)\mathcal{B}(Y(s))dW_H(s),
\end{align}
where the precise definition of the stochastic integral above and the relevant theory on vector-valued stochastic integrals will be given in Section
\ref{s:stochint}. A process satisfying \eqref{voc} is usually referred to as a \emph{mild solution}. The existence of a mild solution to \eqref{SDCP} is proved
by a fixed-point argument (see Section \ref{s:SDE}, Theorem \ref{t:exist}). Using the factorization method we prove the continuity of a mild solution to
\eqref{SDCP} (Theorem \ref{t:regSDCP}). In Section \ref{s:SCP} we give general conditions under which a mild solution is equivalent to what we call a
\emph{generalized strong solution} of the stochastic Cauchy problem. Finally, Theorem \ref{t:repr} states that solutions to \eqref{SDCP} and \eqref{SDE} are
equivalent, which is proved by using the concept of a generalized strong solution. Combining all these results we obtain existence, uniqueness and continuity of a solution to \eqref{SDE}, see Corollaries \ref{c:SDEexist} and \ref{c:SDEcont}.\par
An obvious consequence of our results is that one has the existence of a solution for initial value $f_0\in L^2(\F_0,L^1(-1,0;E))$. The $L^1$-norm is a
natural choice in population dynamics, see \cite[Example 3.16]{batkaiPiazzera}. The equivalence of solutions to \eqref{SDE} and to \eqref{SDCP} is useful
because the latter can be studied in the framework of the stochastic abstract Cauchy problem; thus answering questions concerning e.g.\ regularity and
invariant measures of the solutions to \eqref{SDE} (see Theorem \ref{t:regSDCP} and Remark \ref{r:invmeasure}). We also have that the solution to \eqref{SDCP}
is a Markov process, whereas the solution to \eqref{SDE} is not.\par
For the theory of stochastic delay equations in the case that $E$ is finite-di\-men\-sional we refer to the monographs by Mohammed \cite{Moh:84} and Mao
\cite{Mao:97} and references therein. In particular we wish to mention \cite{chojnowskaMichalik_reprSDE}, where equivalence of solutions to the stochastic delay
equation and the corresponding abstract Cauchy problem has been shown by Chojnowska-Michalik for the Hilbert space case, i.e.\ the case that $p=2$ and $E$ is
finite-dimensional. Similar results concerning the abstract Cauchy problem arising from delay equations with state space $C([0,1])$ with additive noise are
given by Van Neerven and Riedle \cite{vanNeervenRiedle_SDE}. For a general class of spaces including the $\mathcal{E}^p$-spaces the variation of constants
formula for finite-dimensional delay equations with additive noise and a bounded delay operator is discussed in Riedle \cite{riedle_SDE}. The latter articles
both consider the stochastic convolution as a stochastic integral in a locally convex space. So far there is no suitable interpretation for the stochastic
integral of a stochastic process in a locally convex space, hence this approach fails for equations with multiplicative noise.\par
Stochastic delay equations where $E$ is a Hilbert space and $p=2$ have been considered by Taniguchi, Liu, and Truman \cite{TanLiuTru:02}, Liu \cite{Liu:08} and
Bierkens, Van Gaans and Verduyn-Lunel \cite{BierGaansVer09}. Both \cite{TanLiuTru:02} and \cite{Liu:08} prove existence and uniqueness of solutions to
\eqref{SDE}; in \cite{TanLiuTru:02} it is assumed that $A$ generates an analytic semigroup, whereas in \cite{Liu:08} the noise is assumed to be additive. In
\cite{BierGaansVer09} the existence of an invariant measure has been studied. Very recently, Crewe \cite{Crewe:10} has taken it upon himself to prove existence,
uniqueness and regularity properties of \eqref{SDE} in \textsc{umd} Banach spaces under the assumption that $A$ generates an analytic semigroup.
\section{Preliminaries: Stochastic integration in Banach spaces}\label{s:stochint}
In this section we briefly recall some theory for stochastic integration in Banach spaces as introduced in \cite{vanNeervenVeraarWeis}. Throughout this section
let $H,\mathcal{H}$ denote Hilbert spaces and let $F$ denote a Banach space. By $L^0(\Omega;F)$ we denote the complete metric space of strongly measurable
functions on $\Omega$ with values in $F$ equipped with the topology of convergence in probability.\par
To build stochastic integrals of $\mL(H,F)$-valued processes we start by considering finite rank adapted step processes, i.e.\ processes of the form
\begin{align*}
\Phi(t,\omega)= \sum_{n=1}^{N}1_{(t_{n-1},t_n]}(t)
\sum_{m=1}^{M} 1_{A_{nm}}(\omega) \sum_{k=1}^{K}h_k \otimes x_{nmk},
\end{align*}
where $0=t_0<t_1<...<t_N$, $A_{nm}\in \F_{t_{n-1}}$, $x_{nmk}\in F$ and $(h_k)_{k\geq 1}$ is an orthonormal system in $H$. If $W_H$ is an $H$-cylindrical
Brownian motion adapted to $(\F_t)_{t\geq 0}$, then the integral of $\Phi$ with respect to $W_H$ is given by:
\begin{align*}
\int_0^{t_N} \Phi dW_H &=\sum_{n=1}^{N} \sum_{m=1}^{M} 1_{A_{nm}} \sum_{k=1}^{K} (W_H(t_n)h_k-W_H(t_{n-1})h_k) x_{nmk}.
\end{align*}
To extend this to general processes, we need some extra terminology:
\begin{defn}
Let $(\Omega,\F,P)$ be a probability space with filtration $(\F_t)_{t\geq 0}$. A process $\Phi:[0,\infty)\times \Omega\rightarrow \mL(H,F)$ is called
$H$-\emph{strongly measurable} if for every $h\in H$ the process $\Phi h$ is strongly measurable. The process is called \emph{adapted} if $\Phi h$ is adapted
for each $h\in H$ and we say that $\Phi$ is \emph{scalarly in} $L^q(\Omega;L^2(0,\infty;H))$ for some $q\in [0,\infty]$ if for all $x^*\in F^*$ one has
$\Phi^*x^*\in L^q(\Omega;L^2(0,\infty;H))$.
\end{defn}
The stochastic integral for general $\mL(H,F)$-valued processes is defined as follows: 
\begin{defn}\label{d:Lp_stochInt}
Let $W_H$ be an $H$-cylindrical Brownian motion. An $H$-strongly measurable adapted process $\Phi:[0,t]\times \Omega\rightarrow \mL(H,F)$ is called
\emph{stochastically integrable with respect to $W_H$} if there exists a sequence of finite rank adapted step processes $\Phi_n:[0,t]\times \Omega\rightarrow \mL(H,F)$
such that:
\begin{enumerate}
\item for all $h\in H$ and $x^*\in F^*$ we have $\lim_{n \rightarrow \infty}\langle \Phi_n h, x^* \rangle = \langle \Phi h, x^*\rangle$ in measure on
$[0,t]\times \Omega$;
\item there exists a process $X\in L^0(\Omega;C([0,t];F))$ such that 
\begin{align*}
\lim_{n\rightarrow \infty}\int_{0}^{\cdot} \Phi_n dW_H & = X \quad \textrm{in probability}.
\end{align*}
\end{enumerate}
The stochastic integral of $\Phi$ is then defined as
\begin{align*}
\int_{0}^{\cdot} \Phi dW_H := X.
\end{align*}
\end{defn}
A characterization of the processes which are stochastically integrable is obtained by means of the \emph{$\gamma$-radonifying norm}. Let $(\gamma_j)_{j\geq 1}$
be a sequence of independent standard Gaussian random variables. A bounded operator $R$ from $\mathcal{H}$ to $F$
is called \emph{$\gamma$-summing} if 
$$ \n R \n_{\gamma_{\infty}(\mathcal{H},F)}^2 := \sup_h \E\ \Big\n \sum_{j=1}^k \gamma_j R h_j \Big\n_F^2$$
is finite, where the supremum is taken over all finite orthonormal systems 
$h=(h_j)_{j=1}^k$ in $\mathcal{H}$. It can be shown that $\n\cdot \n_{\gamma_{\infty}(\mathcal{H},F)}$ is indeed a norm under which the space of $\gamma$-summing
operators is complete. We will later take $\mathcal{H}=L^2(0,t;H)$.\par
The space $\gamma(\mathcal{H},F)$ of $\gamma$-\emph{radonifying} operators is defined to be the closure of the finite rank operators under the norm
$\n\cdot\n_{\gamma_{\infty}}$; it is a closed subspace of $\gamma_{\infty}(\mathcal{H},F)$. Thus if $R\in \gamma(\mathcal{H},F)$ then range$(R)$ is separable
and there exists a separable subspace $\mathcal{H}_0\subset \mathcal{H}$ such that $R|_{\mathcal{H}_0^{\perp}} \equiv 0$. \par
A celebrated result of Kwapie\'n and Hoffmann-J{\o}rgensen \cite{HofJor, Kwa} implies that if $F$ does not contain a closed subspace isomorphic to $c_0$ then
$\gamma(\mathcal{H},F)=\gamma_\infty(\mathcal{H},F)$. This is the case for the spaces $L^p(-1,0;F)$ if $p\in [1,\infty)$ and $F$ is a \textsc{umd} Banach space.
\par
Note also that every $\gamma$-radonifying operator is compact and that the class of $\gamma$-radonifying operators is a left- and right ideal in the set of
bounded operators:
\begin{align*}
\n SRT \n_{\gamma(\mathcal{H}_1,F_2)}& \leq \n S\n_{\mL(F_1,F_2)}\n R \n_{\gamma(\mathcal{H}_2,F_1)} \n T \n_{\mL(\mathcal{H}_1,\mathcal{H}_2)}, 
\end{align*}
where $\mathcal{H}_1,\mathcal{H}_2$ are Hilbert spaces and $F_1,F_2$ are Banach spaces.\par
In what follows we will use the notation $A\lesssim_p B$ to express the fact that there exists a constant $C>0$, depending on $p$, such that $A\leq CB$. We
write $A\eqsim_p B$ if $A\lesssim_p B\lesssim_p A$. \par
Theorem 5.9 and Theorem 5.12 in \cite{vanNeervenVeraarWeis} state the relation between the $\gamma$-rado\-ni\-fying norm and the stochastically integrable
processes (see also \cite{coxveraar} for relation \eqref{BDG}). We summarize these results as follows:
\begin{thm}\label{t:stochint}
Let $F$ be a \textsc{umd} space. For an $H$-strongly measurable adapted process $\Phi:[0,t]\times \Omega \rightarrow \mL(H,F)$ belonging to
$L^0(\Omega;L^2(0,t;H))$ scalarly, the following are equivalent:
\begin{enumerate}
\item $\Phi$ is stochastically integrable;\label{l:sintble}
\item there exists a process $\eta\in L^0(\Omega;C([0,t];F))$ such that for all $x^*\in F^*$ we have
\begin{align*}
\langle \eta, x^* \rangle &=\int_{0}^{\cdot}\Phi^*(s)x^* dW_H(s) \quad \textrm{a.s.};
\end{align*} \label{l:sweak}
\item $\Phi$ represents an element $R_{\Phi}\in L^{0}(\Omega; \gamma(L^2(0,t;H),F))$ which is defined as follows: \label{l:gamma}
\begin{align*}
R_{\Phi}(\omega)f &:= \int_{0}^{t} \Phi(s,\omega)f(s)\ ds
\end{align*}
($f\in L^2(0,t;H)$).
\end{enumerate}
In this situation one has $\eta=\int_0^{\cdot}\Phi dW_H$ and for all $p \in (0,\infty)$
\begin{align}\label{BDG}
\E\ \sup_{0\leq s\leq t} \Big\n \int_{0}^{s} \Phi(u) dW_H(u) \Big\n_F^p \eqsim_p \E\ \n R_{\Phi} \n_{\gamma(L^2(0,t;H),F)}^{p}.
\end{align}
\end{thm}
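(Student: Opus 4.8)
The plan is to reduce the whole statement to an It\^o isomorphism for elementary integrands and then to bootstrap, with the two-sided bound \eqref{BDG} as the engine driving all three equivalences; this is essentially the route of \cite{vanNeervenVeraarWeis}. I would begin with a finite rank adapted step process $\Phi$. For such a $\Phi$ the operator $R_\Phi$ of \eqref{l:gamma} is a finite rank operator and $\int_0^{t}\Phi\,dW_H$ is an explicit finite sum of Brownian increments. To estimate $\E\n\int_0^{t}\Phi\,dW_H\n_F^p$ I would pass to the decoupled integral $\int_0^{t}\Phi\,d\widetilde W_H$, where $\widetilde W_H$ is an independent copy of $W_H$: conditionally on $W_H$ this is a centred Gaussian vector $G$ in $F$ with covariance $R_\Phi R_\Phi^*$, for which $\E\n G\n_F^2=\n R_\Phi\n_{\gamma(L^2(0,t;H),F)}^2$ essentially by definition of the $\gamma$-norm, while $\E\n G\n_F^p\eqsim_p\E\n G\n_F^2$ for every $p\in(0,\infty)$ by the Kahane--Khintchine inequalities. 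Taking expectations gives $\E\n\int_0^{t}\Phi\,d\widetilde W_H\n_F^p\eqsim_p\E\n R_\Phi\n_{\gamma(L^2(0,t;H),F)}^p$, and the two-sided decoupling inequality valid in the \textsc{umd} space $F$, namely $\E\n\int_0^{t}\Phi\,dW_H\n_F^p\eqsim_p\E\n\int_0^{t}\Phi\,d\widetilde W_H\n_F^p$, transfers this back to $W_H$. Doob's maximal inequality for $F$-valued martingales then replaces the left-hand side by the running supremum, which is \eqref{BDG} for step processes.

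With the isomorphism in hand for step processes I would establish the equivalences by approximation. For \eqref{l:gamma}$\Rightarrow$\eqref{l:sintble}: given $\Phi$ representing $R_\Phi\in L^0(\Omega;\gamma(L^2(0,t;H),F))$, I would first localise by stopping times to reduce to an $L^p$-bounded situation, then pick finite rank adapted step processes $\Phi_n$ with $R_{\Phi_n}\to R_\Phi$ in $L^p(\Omega;\gamma(L^2(0,t;H),F))$ (finite rank operators are dense in $\gamma$); applying \eqref{BDG} to $\Phi_n-\Phi_m$ shows that $\big(\int_0^{\cdot}\Phi_n\,dW_H\big)_n$ is Cauchy in $L^0(\Omega;C([0,t];F))$, so its limit $X$ exhibits $\Phi$ as stochastically integrable and \eqref{BDG} passes to the limit. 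The implication \eqref{l:sintble}$\Rightarrow$\eqref{l:sweak} I would obtain by testing the defining convergence of $\int_0^{\cdot}\Phi_n\,dW_H$ against $x^*\in F^*$ and invoking the scalar theory. The crucial implication is \eqref{l:sweak}$\Rightarrow$\eqref{l:gamma}: using the scalar It\^o isometry applied to $\Phi^*x^*$ one checks that $\omega\mapsto R_\Phi(\omega)$ almost surely defines a $\gamma$-summing operator, hence an element of $\gamma_\infty(L^2(0,t;H),F)$; since $F$ is \textsc{umd} it contains no copy of $c_0$, so by the result of Kwapie\'n and Hoffmann-J{\o}rgensen quoted above $\gamma_\infty=\gamma$ and $R_\Phi\in L^0(\Omega;\gamma(L^2(0,t;H),F))$. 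An It\^o--Nisio type argument then identifies $\eta$ with $\int_0^{\cdot}\Phi\,dW_H$, closing the loop.

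I expect the main obstacle to be the two-sided decoupling inequality $\E\n\int_0^{t}\Phi\,dW_H\n_F^p\eqsim_p\E\n\int_0^{t}\Phi\,d\widetilde W_H\n_F^p$, which is available precisely because $F$ is \textsc{umd}; it is what upgrades the easy one-sided estimate into a genuine isomorphism and thereby lets one recover $R_\Phi$ from the stochastic integral. A second delicate point is the reconstruction \eqref{l:sweak}$\Rightarrow$\eqref{l:gamma}: assembling a consistent family of scalar stochastic integrals into a single $F$-valued random operator requires both the $c_0$-free property of $F$ and a careful closedness and measurability argument for the $\gamma$-norm. The remaining ingredients --- the Kahane--Khintchine inequalities, Doob's inequality, density of finite rank operators in $\gamma$, and the localisation removing the a priori integrability hypothesis --- are routine once these two are in place.
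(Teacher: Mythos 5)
First, note that the paper does not prove this theorem at all: it is quoted verbatim as a summary of Theorems 5.9 and 5.12 of \cite{vanNeervenVeraarWeis}, with \cite{coxveraar} cited for the two-sided estimate \eqref{BDG} for all $p\in(0,\infty)$. So what you have written is a reconstruction of the proof in the cited source, and in broad outline it is the right strategy (decoupling for elementary adapted processes, Kahane--Khintchine for the conditionally Gaussian decoupled integral, approximation and localisation, and the Hoffmann-J{\o}rgensen--Kwapie\'n theorem to pass from $\gamma_\infty$ to $\gamma$). Two of your steps, however, have genuine gaps. The first is the claim that Doob's maximal inequality upgrades the It\^o isomorphism to \eqref{BDG}: Doob only works for $p>1$, so your argument does not reach the full range $p\in(0,\infty)$ asserted in the theorem. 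Extending to $p\in(0,1]$ requires an extra ingredient (a Lenglart/good-$\lambda$ domination argument, or the decoupling-based Burkholder--Davis--Gundy inequality of \cite{coxveraar}); this is precisely why the paper cites \cite{coxveraar} alongside \cite{vanNeervenVeraarWeis} for \eqref{BDG}.

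The second and more serious gap is your treatment of \eqref{l:sweak}$\Rightarrow$\eqref{l:gamma}. You propose to deduce from ``the scalar It\^o isometry applied to $\Phi^*x^*$'' that $R_\Phi(\omega)$ is a.s.\ $\gamma$-summing. As stated this does not work: the hypothesis is only that $\Phi^*x^*\in L^2(0,t;H)$ a.s.\ for each fixed $x^*$, with no integrability in $\omega$, so there is no It\^o isometry to apply, and in any case the exceptional null set depends on $x^*$, so per-functional information cannot be assembled into a pathwise bound on $\sup_h \E\,\n\sum_j\gamma_j R_\Phi h_j\n^2$ without further argument. The actual mechanism in \cite{vanNeervenVeraarWeis} is different: one uses the a.s.\ boundedness of the paths of $\eta$ together with the \textsc{umd} decoupling inequality to show that the decoupled, conditionally Gaussian integrals are bounded in probability, and then an It\^o--Nisio/Gaussian-series argument (a.s.\ bounded Gaussian sums in a space not containing $c_0$ converge) yields $R_\Phi\in\gamma(L^2(0,t;H),F)$ a.s., with the measurability of $\omega\mapsto R_\Phi(\omega)$ handled separately (cf.\ Remark \ref{r:measrep}). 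So the \textsc{umd} property enters this implication through decoupling, not merely through the absence of $c_0$ as your sketch suggests. Finally, in \eqref{l:gamma}$\Rightarrow$\eqref{l:sintble} the existence of \emph{adapted} finite rank step processes $\Phi_n$ with $R_{\Phi_n}\to R_\Phi$ is itself a nontrivial approximation lemma (density of finite rank operators in $\gamma$ alone does not give adaptedness) and should not be taken for granted.
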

\begin{remark}\label{r:measrep}
If $\Phi$ is $H$-strongly measurable and $R_{\Phi}\in \gamma(L^2(0,t;H),F)$ a.s.\ then by \cite[Lemma 2.5, 2.7 and Remark 2.8]{vanNeervenVeraarWeis} one
automatically obtains that $R_{\Phi}\in L^{0}(\Omega; \gamma(L^2(0,t;H);F))$. Thus in this situation one may assume without loss of generality that $H$ and $F$
are separable.\par
\end{remark}
From now on we shall simply write $\n \Phi \n_{\gamma(0,t;H,F)}$ to denote the $\gamma(L^2(0,t;H),F)$-norm of the operator $R_{\Phi}$ associated with $\Phi$.
\begin{remark}\label{r:stochint_umdmin}
One checks that if \eqref{l:gamma} in the theorem above holds, then $\Phi$ must be scalarly in $L^0(\Omega;L^2(0,t;H))$. Moreover, the implication
\eqref{l:sintble}$\Rightarrow$\eqref{l:sweak} holds for arbitrary Banach spaces. This follows from the Burkholder-Davis-Gundy inequalities (see the proof of
Theorem 3.6 in \cite{vanNeervenVeraarWeis}).\par
For $1<p<\infty$ one has that if $F$ is a \textsc{umd} space then so is $\mathcal{E}^p(F)$. However, $L^1$ is not a \textsc{umd} space so neither is
$\mathcal{E}^1(-1,0;F)$. Fortunately, $L^1$ does have the (weaker) decoupling property as introduced by Kwapie{\'n} and Woyczy{\'n}ski in \cite{KwWo1}. If $F$
has the decoupling property then $\mathcal{E}^p(F)$ is Banach space satisfying the decoupling inequality (by a Fubini argument, see \cite{coxveraar}). It was
proved in \cite{coxveraar} that for spaces with the decoupling property implication \eqref{l:gamma}$\Rightarrow$\eqref{l:sintble} remains valid. The two-sided
estimate as given in \eqref{BDG} need not hold in such spaces, but it is shown in \cite{coxveraar} that in spaces with the decoupling property the following
one-sided estimate holds for all $p\in(0,\infty)$:
\begin{align}\label{BDG2}
\E\ \sup_{0\leq s\leq t} \Big\n \int_{0}^{s} \Phi(u) dW_H(u) \Big\n_F^p \lesssim_p \E\ \n \Phi \n_{\gamma(0,t;H,F)}^{p}.
\end{align}
Note that in particular the integral process $t\mapsto \int_{0}^{t} \Phi(s)dW_H(s)$ is continuous.
\end{remark}
For $\gamma$-radonifying operators with values in an $L^p$-space we have the following isomorphism (see \cite{vanNeervenVeraarWeis_SEEinUMD}, Proposition 2.6):
\begin{lemma}\label{l:LpGammaFub}
Let $(S,\mathcal{S},\mu)$ be a $\sigma$-finite measure space, let $\mathcal{H}$ be a Hilbert space and let $p\in[1,\infty)$. Then the mapping
$U:L^p(S;\gamma(\mathcal{H},F))\rightarrow \mathcal{L}(\mathcal{H},L^p(S;F))$ defined by
\begin{align*}
((Uf)h)(\xi):=f(\xi)h,\quad \xi \in S, h\in \mathcal{H},
\end{align*}
defines an isomorphism $U$ of $L^p(S;\gamma(\mathcal{H},F))$ onto $\gamma(\mathcal{H},L^p(S;F))$.
\end{lemma}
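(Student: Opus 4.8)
The plan is to establish the two-sided norm estimate $\n Uf\n_{\gamma(\mathcal{H},L^p(S;F))}\eqsim_p \n f\n_{L^p(S;\gamma(\mathcal{H},F))}$ on a convenient dense subspace of $L^p(S;\gamma(\mathcal{H},F))$, then extend $U$ by continuity to an isomorphism onto a \emph{closed} subspace of $\gamma(\mathcal{H},L^p(S;F))$, and finally check separately that this subspace exhausts $\gamma(\mathcal{H},L^p(S;F))$. Since a $\gamma$-radonifying operator vanishes on the orthogonal complement of some separable closed subspace of its domain, and $f$ is strongly measurable hence a.e.\ separably valued, I would first reduce to the case that $\mathcal{H}$ is separable and fix an orthonormal basis $(h_j)_{j\geq1}$.

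For the dense subspace I would take the functions $f=\sum_{j=1}^{k}h_j\otimes g_j$ with $g_1,\dots,g_k$ $F$-valued simple functions: these are dense because finite rank operators are dense in $\gamma(\mathcal{H},F)$ by definition and simple functions are dense in any Bochner space, so one approximates $f$ first by $\gamma(\mathcal{H},F)$-valued simple functions and then replaces each of its finitely many values by a finite rank operator. For such an $f$ one has $Uf=\sum_{j=1}^{k}h_j\otimes g_j$ as a finite rank operator $\mathcal{H}\to L^p(S;F)$, hence $Uf\in\gamma(\mathcal{H},L^p(S;F))$, with $(Uf)h_j=g_j$; moreover $f(\xi)=\sum_{j=1}^{k}h_j\otimes g_j(\xi)$ is finite rank for each $\xi$. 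The computation then runs
\begin{align*}
\n Uf\n_{\gamma(\mathcal{H},L^p(S;F))}^2 &= \E\, \Big\n \sum_{j=1}^{k}\gamma_j g_j\Big\n_{L^p(S;F)}^2 \eqsim_p \Big(\E\, \Big\n \sum_{j=1}^{k}\gamma_j g_j\Big\n_{L^p(S;F)}^p\Big)^{2/p}\\
&= \Big(\int_S \E\, \Big\n \sum_{j=1}^{k}\gamma_j g_j(\xi)\Big\n_F^p\, d\mu(\xi)\Big)^{2/p} \eqsim_p \Big(\int_S \n f(\xi)\n_{\gamma(\mathcal{H},F)}^p\, d\mu(\xi)\Big)^{2/p},
\end{align*}
the right-hand side being $\n f\n_{L^p(S;\gamma(\mathcal{H},F))}^2$; here I use Fubini's theorem to interchange the Gaussian average with integration over $S$, and the Kahane--Khintchine inequalities (twice) to pass between the exponents $2$ and $p$.

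From this, $U$ extends uniquely to an isomorphism $\bar U$ of $L^p(S;\gamma(\mathcal{H},F))$ onto a closed subspace of $\gamma(\mathcal{H},L^p(S;F))$, and a short argument using $\n Rh\n_F\leq\n R\n_{\gamma(\mathcal{H},F)}$ shows that $\bar U$ is still given by the pointwise formula. To obtain surjectivity I would observe that the finite rank operators $\mathcal{H}\to L^p(S;F)$ are, by definition, dense in $\gamma(\mathcal{H},L^p(S;F))$, and that any such operator, after a Gram--Schmidt orthonormalization of its input vectors, takes the form $\sum_j h_j\otimes u_j$ with $(h_j)$ orthonormal and $u_j\in L^p(S;F)$, hence equals $U(\xi\mapsto\sum_j h_j\otimes u_j(\xi))$ with the argument in $L^p(S;\gamma(\mathcal{H},F))$; so the range of $\bar U$ is a closed set containing a dense set and therefore equals $\gamma(\mathcal{H},L^p(S;F))$. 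The step I expect to be the main obstacle is the middle display: making the Fubini interchange of the Gaussian expectation and the $S$-integral rigorous, which is what forces the intermediate passage to exponent $p$ via Kahane--Khintchine and is the source of the $p$-dependence of the isomorphism constants (the map is isometric only when $p=2$). The remaining points---measurability of $\xi\mapsto\n f(\xi)\n_{\gamma(\mathcal{H},F)}$ and the separability reductions---are routine.
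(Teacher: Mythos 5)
Your proposal cannot be compared with an internal argument, because the paper does not prove this lemma at all: it is quoted verbatim from \cite{vanNeervenVeraarWeis_SEEinUMD}, Proposition 2.6. Judged on its own, your proof is correct and is in essence the standard argument behind that cited result: establish the two-sided estimate on the dense class of functions $f=\sum_{j=1}^k h_j\otimes g_j$ with $(h_j)$ orthonormal and $g_j$ simple, using Fubini (this is where $\sigma$-finiteness and joint measurability enter, and both are unproblematic for simple $g_j$) together with the Kahane--Khintchine inequalities once in $L^p(S;F)$ and once in $F$; then extend by density, identify the extension with the pointwise formula via $\n Rh\n_F\le\n R\n_{\gamma(\mathcal{H},F)}\n h\n_{\mathcal{H}}$; and get surjectivity because finite-rank operators $\mathcal{H}\to L^p(S;F)$ are by definition dense in $\gamma(\mathcal{H},L^p(S;F))$ and visibly lie in the range. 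The one ingredient you use twice but leave implicit is the identity $\n\sum_{j=1}^k h_j\otimes u_j\n^2_{\gamma(\mathcal{H},X)}=\E\,\n\sum_{j=1}^k\gamma_j u_j\n_X^2$ for an orthonormal system $(h_j)$, i.e.\ that the supremum in the definition of the $\gamma$-norm is attained on an orthonormal basis of the (finite-dimensional) support; this follows from the Gaussian covariance domination principle that the paper itself invokes elsewhere, applied both to $Uf$ and pointwise to $f(\xi)$. With that remark made explicit, your argument is a complete and self-contained proof, correctly locating the source of the $p$-dependent constants (isometry exactly when $p=2$); the separability reduction is harmless but not essential.
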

The following stochastic Fubini theorem is based on \cite[Theorem 3.5]{NeerVer}.
%
\begin{lemma}\label{l:stochFubini}
Let $(S,\mathcal{S},\mu)$ be a $\sigma$-finite measure space and let $F$ be a Banach space satisfying the decoupling property. Let $\Phi:S\times [0,t] \times
\Omega \rightarrow \mL(H,F)$ and for $s\in S$ define $\Phi_s:[0,t]\times\Omega\rightarrow \mL(H,F)$ by $\Phi_s(u,\omega)=\Phi(s,u,\omega)$. Assume the following
is satisfied:
\begin{enumerate}
\item \label{meascond1} $\Phi$ is $H$-strongly measurable;
\item \label{meascond2} For all $s\in S$ and all $h\in H$ the section $\Phi_s h$ is progressive;
\item \label{intcond1} For almost all $u\in [0,t]$ and almost all $\omega\in \Omega$ one has $\Phi(\cdot, u,\omega)h \in L^1(S;F)$ for all $h\in H$ and the
operator $\int_S \Phi d\mu :H\rightarrow F$ defined by $\int_S \Phi d\mu h := \int_S \Phi h d\mu$ is in $\mL(H,F)$;
\item \label{intcond3} The process $u\mapsto\int_S \Phi(s,u) d\mu(s)$ represents an element of $\gamma(0,t;H,F)$ a.s.; 
\item \label{intcond4} The function $s\mapsto \Phi_s$ represents an element of $L^1(S;\gamma(0,t;H,F))$ a.s.
\end{enumerate}
Then the function $s\mapsto \int_{0}^{t} \Phi(s,u) dW_H(u)$ belongs to $L^1(S;F)$ a.s.\ and
\begin{align}\label{fub}
\int_S \int_{0}^{t} \Phi dW_H d\mu &= \int_{0}^{t}\int_S  \Phi d\mu dW_H \quad \textrm{a.s.}
\end{align}
\end{lemma}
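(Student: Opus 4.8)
The plan is to realise the left-hand side of \eqref{fub} as the $\mu$-integral of a \emph{single} stochastic integral taking values in $L^1(S;F)$, and then to interchange that stochastic integral with the bounded operator ``integration against $\mu$''. In this way the vector-valued statement will reduce to the scalar stochastic Fubini theorem \cite[Theorem 3.5]{NeerVer} together with the $L^p$--$\gamma$ Fubini isomorphism of Lemma \ref{l:LpGammaFub}.

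First I would make the usual separability reductions. Since $\Phi$ is $H$-strongly measurable and conditions \eqref{intcond3}, \eqref{intcond4} only involve $\gamma$-representations holding almost surely, Remark \ref{r:measrep} lets me assume $H$ and $F$ separable; passing if necessary to a countably generated sub-$\sigma$-algebra of $\mathcal{S}$ on which $\mu$ stays $\sigma$-finite and with respect to which all the data are measurable, I may also take $L^1(S;F)$ separable. Next I would introduce the ``stacked'' integrand $\Psi:[0,t]\times\Omega\to\mL(H,L^1(S;F))$, $(\Psi(u,\omega)h)(s):=\Phi(s,u,\omega)h$, which is $H$-strongly measurable and adapted by \eqref{meascond1} and \eqref{meascond2}. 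By \eqref{intcond4} the map $s\mapsto\Phi_s$ represents an element $R$ of $L^1(S;\gamma(L^2(0,t;H),F))$ a.s., and a direct check shows that the isomorphism $U$ of Lemma \ref{l:LpGammaFub} (with $p=1$) carries $R$ onto the operator $R_\Psi$ associated with $\Psi$, so $R_\Psi\in\gamma(L^2(0,t;H),L^1(S;F))$ a.s. Since $F$ has the decoupling property, so does $L^1(S;F)$ (a Fubini argument, cf.\ \cite{coxveraar}), and Remark \ref{r:stochint_umdmin} then gives that $\Psi$ is stochastically integrable; I set $Y:=\int_0^t\Psi\,dW_H\in L^0(\Omega;L^1(S;F))$.

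The core of the argument is to identify the sections of $Y$, namely to prove $Y(s)=\int_0^t\Phi_s\,dW_H$ for $\mu$-a.e.\ $s$, almost surely --- which already yields the first assertion of the lemma. I would do this by a duality argument: for $x^*\in F^*$ and $\Lambda\in\mathcal{S}$ with $\mu(\Lambda)<\infty$, testing $Y$ against $1_\Lambda\otimes x^*\in L^\infty(S;F^*)\subset(L^1(S;F))^*$ and using that \eqref{l:sintble} implies \eqref{l:sweak} in arbitrary Banach spaces (Remark \ref{r:stochint_umdmin}), together with the identity $\Psi(u,\omega)^*(1_\Lambda\otimes x^*)=\int_\Lambda\Phi(s,u,\omega)^*x^*\,d\mu(s)$ in $H$, gives at time $t$ that $\int_\Lambda\langle Y(s),x^*\rangle\,d\mu(s)=\int_0^t\langle\int_\Lambda\Phi_s^*x^*\,d\mu(s),dW_H\rangle$ a.s. On the other hand the \emph{scalar} stochastic Fubini theorem \cite[Theorem 3.5]{NeerVer}, applied to the progressive $H$-valued process $(s,u,\omega)\mapsto\Phi(s,u,\omega)^*x^*$ (with hypotheses coming from \eqref{meascond2} and \eqref{intcond1}), furnishes a jointly measurable version of $s\mapsto\langle\int_0^t\Phi_s\,dW_H,x^*\rangle=\int_0^t\Phi_s^*x^*\,dW_H$, lying in $L^1(\Lambda)$ a.s., whose integral over $\Lambda$ equals the same right-hand side. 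Comparing and letting $\Lambda,x^*$ range over countable determining families (here separability enters) yields the identification. A viable alternative is to approximate $\Psi$ by finite-rank adapted step processes, for which the Fubini identity is a trivial finite-sum computation, and then pass to the limit along a subsequence simultaneously in $L^0(\Omega;L^1(S;F))$ and, $\mu$-sectionwise, in $L^0(\Omega;F)$ using \eqref{BDG2}.

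Finally I would conclude by letting $T:L^1(S;F)\to F$, $Tg:=\int_S g\,d\mu$, be the (bounded) integration operator. Since $\gamma$-radonifying operators form an operator ideal, $TR_\Psi=R_{T\Psi}\in\gamma(L^2(0,t;H),F)$ a.s., and $(T\Psi)(u,\omega)h=\int_S\Phi(s,u,\omega)h\,d\mu(s)=(\int_S\Phi\,d\mu)(u,\omega)h$ is precisely the integrand on the right of \eqref{fub} (well-defined by \eqref{intcond1}--\eqref{intcond3}); hence, using \eqref{l:gamma} and the fact that stochastic integration commutes with bounded operators,
\[
\int_0^t\int_S\Phi\,d\mu\,dW_H=\int_0^tT\Psi\,dW_H=TY=\int_SY\,d\mu=\int_S\int_0^t\Phi\,dW_H\,d\mu\quad\text{a.s.},
\]
the last step by the section identification. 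I expect the main obstacle to be exactly that identification: matching the single $L^1(S;F)$-valued stochastic integral with the family of sectionwise $F$-valued integrals $\int_0^t\Phi_s\,dW_H$ forces one to invoke the scalar stochastic Fubini theorem and to handle joint measurability in $(s,\omega)$ carefully --- this is where the progressivity hypothesis \eqref{meascond2} does its work --- whereas the remaining steps rest only on Lemma \ref{l:LpGammaFub}, the ideal property of $\gamma(\cdot,\cdot)$, and Theorem \ref{t:stochint} together with Remark \ref{r:stochint_umdmin}.
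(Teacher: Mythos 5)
Your proposal is correct and takes essentially the same route as the paper: both pass to the stacked process $\Psi(u,\omega)(s)=\Phi(s,u,\omega)$ with values in $L^1(S;F)$, use condition \eqref{intcond4} together with the Fubini isomorphism of Lemma \ref{l:LpGammaFub} (and the decoupling property of $L^1(S;F)$) to make $\Psi$ stochastically integrable, identify its sections with $\int_0^t\Phi_s\,dW_H$, and conclude via the scalar stochastic Fubini theorem of van Neerven and Veraar combined with a separability/weak$^*$ argument. The only deviations are organizational --- the paper obtains \eqref{fub} by testing both sides against $x^*\in F^*$ and applying the scalar Fubini theorem directly, whereas you push the integration functional through the $L^1(S;F)$-valued stochastic integral and then use the section identification --- plus one small attributional slip: the integrability hypothesis needed for the scalar Fubini theorem is supplied by \eqref{intcond4} (via $\|\Phi_s^*x^*\|_{L^2(0,t;H)}\le \|x^*\|\,\|\Phi_s\|_{\gamma(0,t;H,F)}$), not by \eqref{meascond2} and \eqref{intcond1}.
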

\begin{proof}
Due to condition \eqref{intcond4} and the Fubini isomorphism in Lemma \ref{l:LpGammaFub} one has that $\Phi$ represents an element of $\gamma(0,t;H,L^1(S;F))$
a.s. As $\Phi$ is assumed to be $H$-strongly measurable we may assume $H$ and $F$ to be separable by Remark \ref{r:measrep}. This implies that $\Phi^*x^*$ is
strongly measurable for all $x^*\in F^*$ by Pettis's measurability theorem, and that $\Phi_s^*x^*$ is progressive for all $x^*\in F^*$, all $s\in S$ and all
$h\in H$.\par
Moreover, because $\Phi$ represents an element of $\gamma(0,t;H,L^1(S;F))$ a.s., the process $\Psi:[0,t]\times \Omega\rightarrow \mL(H,L^1(S;F))$ defined by
$$\Psi(u,\omega)(s):= \Phi(s,u,\omega)$$ is stochastically integrable, and by arguments similar to those in the proof of \cite[Theorem 3.5]{NeerVer} it follows
that
$$ \int_{0}^{t} \Phi(s,u) dW_H(u) = \Big(\int_{0}^{t} \Psi(u)dW_H(u)\Big) (s) \quad \textrm{a.s. for almost all }s\in S.$$ This proves that the integral with
respect to $\mu$ on the left-hand side of \eqref{fub} is well-defined. \par
Condition \eqref{intcond1} implies that the process in condition \eqref{intcond3} is well-defined, and this condition in combination with Theorem
\ref{t:stochint} and Remark \ref{r:stochint_umdmin} implies that the stochastic integral on the right-hand side of \eqref{fub} is well-defined.\par
Fix $x^*\in F^*$, then $\Phi^*x^*:S\times [0,t] \times \Omega \rightarrow H$ satisfies conditions (i)-(iii) of \cite[Theorem 3.5]{NeerVer} and hence by that
Theorem we have:
\begin{align*}
\int_S \int_{0}^{t} \Phi^*x^* dW_H d\mu &= \int_{0}^{t}\int_S  \Phi^*x^* d\mu dW_H \quad \textrm{a.s.}
\end{align*}
Although the null-set on which the above fails may depend on $x^*$, this suffices due to the fact that $F^*$ is weak$^*$-separable.
\end{proof}
In the next section we will need the following lemma which shows that as in the case of the Bochner integral, a closed operator can be taken out of a stochastic
integral.
\begin{lemma}\label{l:Aint=intA}
let $F$ be a Banach space satisfying the decoupling property and let $A:D(A)\subset F \rightarrow F$ be a closed, densely defined operator. Suppose
$\Phi:[0,t]\times \Omega\rightarrow \mL(H,F)$ is an $H$-strongly measurable adapted process that represents an element of $\gamma(0,t;H,F)$ a.s. Suppose that
one has $\Phi(s)h\in D(A)$ for all $s\in(0,t)$ and all $h\in H$ a.s., where the null sets are independent of $h$. Suppose moreover that $A\Phi$ is again an
$H$-strongly measurable adapted process that represents an element of $\gamma(0,t;H,F)$ a.s. Then $\int_{0}^{t} \Phi dW_H \in D(A)$ a.s.\ and
\begin{align*}
A\int_{0}^{t} \Phi dW_H &= \int_{0}^{t} A\Phi dW_H \quad \textrm{a.s.} 
\end{align*}
\end{lemma}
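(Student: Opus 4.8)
The plan is to reduce the statement to the known "closed operator out of a stochastic integral" principle for \textsc{umd}/decoupling spaces by passing to the graph of $A$. First I would equip $D(A)$ with the graph norm $\|y\|_{D(A)}^2 := \|y\|_F^2 + \|Ay\|_F^2$, turning $D(A)$ into a Banach space; since $A$ is closed, $D(A)$ is complete, and the graph map $\iota : D(A) \to F \times F$, $y \mapsto (y, Ay)$, is an isometric embedding onto a closed subspace. The key observation is that closed subspaces of a space with the decoupling property again have the decoupling property (the decoupling inequality is inherited by subspaces, since it only involves norms of sums of the values of the process), and finite products of such spaces do too (the Fubini-type argument already invoked for $\mathcal{E}^p$ in Remark~\ref{r:stochint_umdmin}); hence $F \times F$ and therefore $D(A)$ carry the decoupling property. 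Define $\widetilde\Phi : [0,t]\times\Omega \to \mL(H, D(A))$ by $\widetilde\Phi(s)h := \Phi(s)h$, which makes sense by the hypothesis $\Phi(s)h \in D(A)$ a.s.\ with null sets independent of $h$.

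Next I would check that $\widetilde\Phi$ is stochastically integrable as a $D(A)$-valued process. It is $H$-strongly measurable and adapted into $D(A)$: for fixed $h$, $\widetilde\Phi h$ is strongly measurable and adapted into $F$ (given) and into $F$ via $A\Phi h$ (given), hence strongly measurable and adapted into the graph, using that $\iota$ is an isomorphism onto its range and Pettis measurability. To see that $\widetilde\Phi$ represents an element of $\gamma(0,t;H,D(A))$ a.s., I would use the isometric embedding $\iota$ together with the right-ideal/left-ideal property of $\gamma$-radonifying operators: the operator $R_{\widetilde\Phi}$ corresponds under $\iota$ to $f \mapsto (R_\Phi f, R_{A\Phi} f) \in F \times F$, and a $\gamma$-radonifying operator into $F\times F$ is exactly one whose two coordinate operators are $\gamma$-radonifying into $F$ (again the product/Fubini fact, or directly from the definition of the $\gamma$-norm on a product). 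By hypothesis both $R_\Phi$ and $R_{A\Phi}$ are in $\gamma(0,t;H,F)$ a.s., so $R_{\widetilde\Phi} \in \gamma(0,t;H,D(A))$ a.s., and by Remark~\ref{r:measrep} we may assume separability. By the characterization \eqref{l:gamma}$\Rightarrow$\eqref{l:sintble} valid in decoupling spaces (Remark~\ref{r:stochint_umdmin}), $\widetilde\Phi$ is stochastically integrable in $D(A)$, so $\int_0^t \widetilde\Phi\, dW_H$ exists as a $D(A)$-valued random variable.

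Finally I would identify the $D(A)$-valued integral with the $F$-valued one. Since $\iota : D(A)\hookrightarrow F\times F$ is a bounded operator, and bounded operators commute with the stochastic integral (this is immediate from the definition via step processes and passing to the limit, or from the weak characterization \eqref{l:sweak}), we get $\iota\bigl(\int_0^t \widetilde\Phi\, dW_H\bigr) = \int_0^t \iota\widetilde\Phi\, dW_H = \bigl(\int_0^t \Phi\, dW_H,\ \int_0^t A\Phi\, dW_H\bigr)$ a.s.\ — here the first coordinate uses that the inclusion $D(A)\to F$ composed with $\widetilde\Phi$ is just $\Phi$, and the second that it is $A\Phi$. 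Because the range of $\iota$ is the graph of $A$, the left side being in that range says precisely that $\int_0^t \Phi\, dW_H \in D(A)$ a.s.\ and that $A\int_0^t \Phi\, dW_H = \int_0^t A\Phi\, dW_H$ a.s., which is the claim. The main obstacle, and the place to be careful, is the second paragraph: one must verify cleanly that the graph space $D(A)$ inherits the decoupling property and that "$\gamma$-radonifying into a product = coordinatewise $\gamma$-radonifying", so that the abstract integrability criterion of Remark~\ref{r:stochint_umdmin} genuinely applies to the $D(A)$-valued process; everything else is bookkeeping with the isometry $\iota$.
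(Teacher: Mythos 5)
Your proposal is correct, but it proves the lemma by a genuinely different route than the paper. The paper argues by duality: writing $\eta=\int_0^t\Phi\,dW_H$ and $\zeta=\int_0^t A\Phi\,dW_H$, it uses the weak characterization (iii)$\Rightarrow$(ii) of Theorem \ref{t:stochint} (valid under the decoupling property) to get $\langle\eta,x^*\rangle=\int_0^t\Phi^*x^*\,dW_H$ and $\langle\zeta,x^*\rangle=\int_0^t(A\Phi)^*x^*\,dW_H$ a.s., notes that $(A\Phi)^*x^*=\Phi^*A^*x^*$ for $x^*\in D(A^*)$ so that $\langle(\eta,\zeta),(-A^*x^*,x^*)\rangle=0$ a.s., and then uses separability (Remark \ref{r:measrep}) to pick a countable family in $\mathcal{G}(A)^{\bot}$ separating the points of $(F\times F)/\mathcal{G}(A)$, which handles the $x^*$-dependent null sets and yields $(\eta,\zeta)\in\mathcal{G}(A)$. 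You instead work on the primal side: you transport the problem to the Banach space $D(A)$ with the graph norm, verify that it inherits the decoupling property (closed subspace of $F\times F$, finite products handled as for $\mathcal{E}^p$), check that $R_{\widetilde\Phi}\in\gamma(0,t;H,D(A))$ a.s., invoke (iii)$\Rightarrow$(i) of Remark \ref{r:stochint_umdmin} to integrate in $D(A)$, and then push the integral back with bounded operators. Your route is heavier on prerequisites but yields a slightly stronger conclusion, namely stochastic integrability of $\Phi$ in the graph norm; the paper's route needs no structural facts about $D(A)$ at all, only the weak identity plus the separability trick, and is therefore shorter. The one step you should make explicit is the claim that an operator in $\gamma(0,t;H,F\times F)$ whose range lies in the closed subspace $\mathcal{G}(A)$ belongs to $\gamma(0,t;H,\mathcal{G}(A))$: this is true (restrict to a separable part of the Hilbert space, note that the partial Gaussian sums $\sum_j\gamma_j Rh_j$ lie in $\mathcal{G}(A)$ and converge in $L^2(\Omega;F\times F)$, hence in $L^2(\Omega;\mathcal{G}(A))$ since the norms coincide and $\mathcal{G}(A)$ is closed), but it is not quite the same statement as ``coordinatewise $\gamma$-radonifying'' and deserves its own sentence.
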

\begin{proof}
Define random variables $\eta:=\int_0^t \Phi dW_H$ and $\zeta:=\int_0^t A\Phi dW_H$ and observe that by implication (iii)$\implies$(ii) in Theorem
\ref{t:stochint}, which holds for Banach spaces with decoupling property, one has that for all $x^*\in F^*$:
\begin{align*}
\langle \eta, x^*\rangle &= \int_{0}^{t} \Phi^*(s)x^* dW_H(s) \quad \textrm{a.s.,} \\
\langle \zeta, x^*\rangle &= \int_{0}^{t} (A\Phi(s))^*x^* dW_H(s) \quad \textrm{a.s.}
\end{align*}
In particular for $x^*\in D(A^*)$ one has $(A\Phi(s))^*x^*=\Phi^*(s)A^*x^*$, and thus for such $x^*$ one has:
\begin{align}
\langle (\eta,\zeta), (-Ax^*,x^*)\rangle &= \langle \eta, -A^*x^* \rangle + \langle \zeta, x^*\rangle =0\quad a.s. \label{Aint_h1}
\end{align}
Note that the null-set on which the equation above fails to hold may depend on $x^*$. However, as $\Phi$ and $A\Phi$ are assumed to be $H$-strongly measurable
and in $\gamma(0,t;H,F)$ a.s.\ we may assume $F$ to be separable by Remark \ref{r:measrep}. Hence $(F\times F)/ \mathcal{G}(A)$ is separable, where
$\mathcal{G}(A)$ is the graph of $A$, and thus by Hahn-Banach there exists a countable subset of $((F\times F)/ \mathcal{G}(A))^*= \mathcal{G}(A)^{\bot}$ that
separates the points of $(F\times F)/\mathcal{G}(A)$. 

Moreover, one checks that if $(x_1^*,x_2^*) \in \mathcal{G}(A)^{\bot}$ then $x_2^*\in D(A^*)$ and $x_1^*=-A^*x_2^*$. Thus there exists a sequence
$(-Ax^*_n,x^*_n)_{n\in\N}$ that separates points in  $(F\times F)/\mathcal{G}(A)$. As equation \eqref{Aint_h1} holds for arbitrary $x^*\in D(A^*)$, it holds
simultaneously for all $x_n^*$, on a set of measure one. Therefore $(\eta, \zeta) \in \mathcal{G}(A)$, i.e.\ $\eta \in D(A)$ and $A\eta=\zeta$ a.s.
\end{proof}
\section{The Stochastic Cauchy Problem}\label{s:SCP}
In the introduction we mentioned that the stochastic delay equation \eqref{SDE} can be rewritten as a stochastic Cauchy problem. In this section we briefly
consider the stochastic Cauchy problem in general. Let $F$ be a Banach space with the decoupling property and $H$ a Hilbert space, and let $A:D(A)\subset
F\rightarrow F$ be the generator of a $C_0$-semigroup $(T(t))_{t\geq0}$ on $F$. Let $W_H$ be an $H$-cylindrical Brownian motion and let $B:F\rightarrow
\mL(H,F)$ be continuous (where $\mL(H,F)$ is endowed with the strong operator topology). We consider the following problem:
\begin{align}\label{SCP}
\left\{ \begin{array}{rll} dY(t)&=AY(t)dt + B(Y(t))dW_H(t), &t\geq 0;\\
Y(0)&=Y_0. \end{array}\right. \tag{SCP}
\end{align}
\begin{defn}\label{d:strongsolSCP}
An $H$-strongly measurable adapted process $Y$ is called a \emph{generalized strong solution} to \eqref{SCP} if $Y$ is a.s.\ locally Bochner integrable and for
all $t> 0$:
\begin{enumerate}
\item $\int_{0}^{t} Y(s)ds \in D(A)$ a.s.,
\item $B(Y)$ is stochastically integrable on $[0,t]$,
\end{enumerate}
and
$$Y(t) - Y_0 = A \int_{0}^{t} Y(s)ds + \int_{0}^{t} B(Y(s)) dW_H(s)\quad a.s.$$
\end{defn}
We use the term `generalized strong solution' to distinguish this solution concept from the conventional definition of a `strong solution', which concerns a
process satisfying $Y(t)\in D(A)$ a.s.\ for all $t\geq 0$ (see \cite{DaPratoZabczyk}). This assumption is not suitable for our situation, see Remark
\ref{r:strongDPZ} below.
\begin{thm}\label{t:varcons}
Let $Y$ be an $F$-valued $H$-strongly measurable adapted process. For $t\geq 0$ define $\int_{0}^{t} T(s)B(Y(u))ds\in \mL(H,F)$ by $$\Big(\int_{0}^{t} T(s)B(Y(u))
ds\Big)h:=\int_{0}^{t} T(s)B(Y(u))h ds.$$ Assume that for all $t> 0$ the following processes are in $\gamma(0,t;H,F)$ a.s.:
\renewcommand\theenumi{\alph{enumi}}
\begin{enumerate}
\item $B(Y)$;\label{si1}
\item $u\mapsto T(t-u)B(Y(u))$;\label{si2}
\item $u\mapsto \int_{0}^{t-u}T(s)B(Y(u))ds$;\label{si4}
\end{enumerate}
and that for all $t>0$
\begin{equation}\label{si5}
    \int_0^t\n T(s-\cdot)B(Y(\cdot))\n_{\gamma(0,s,H;F)}ds<\infty.
\end{equation}
Then $Y$ is a generalized strong solution to \eqref{SCP} if and only if $Y$ satisfies, for all $t\geq 0$,
\begin{align}\label{varcon}
Y(t) &= T(t)Y_0 + \int_{0}^{t}T(t-s)B(Y(s))dW_H(s) \quad \textrm{a.s.}
\end{align}
\end{thm}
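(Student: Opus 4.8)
The plan is to prove the two implications separately, using the stochastic Fubini theorem (Lemma~\ref{l:stochFubini}) as the main tool, together with the fact (Lemma~\ref{l:Aint=intA}) that a closed operator commutes with a stochastic integral. Throughout, write $\Phi(u):=B(Y(u))$, and note that hypotheses \eqref{si1}--\eqref{si4} together with \eqref{si5} are precisely the integrability conditions needed to apply these two lemmas to the relevant processes.

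First I would prove that \eqref{varcon}$\Rightarrow$ generalized strong solution. Starting from $Y(t)=T(t)Y_0+\int_0^t T(t-s)\Phi(s)\,dW_H(s)$, I would integrate over $[0,t]$ in the Bochner sense. For the deterministic term, $\int_0^t T(r)Y_0\,dr\in D(A)$ with $A\int_0^t T(r)Y_0\,dr = T(t)Y_0-Y_0$, which is the standard $C_0$-semigroup identity. For the stochastic term, the key computation is
\begin{align*}
\int_0^t\!\!\int_0^r T(r-s)\Phi(s)\,dW_H(s)\,dr
&=\int_0^t\!\!\int_s^t T(r-s)\,dr\,\Phi(s)\,dW_H(s)
=\int_0^t\!\Big(\int_0^{t-s}\!T(\sigma)\,d\sigma\Big)\Phi(s)\,dW_H(s),
\end{align*}
where the interchange of $\int_0^t dr$ and $\int_0^r dW_H(s)$ is justified by Lemma~\ref{l:stochFubini} applied with $S=[0,t]$, $\mu=$ Lebesgue measure, and $\Phi(s,r,\omega)=1_{\{s\le r\}}T(r-s)\Phi(s,\omega)$; conditions \eqref{intcond3} and \eqref{intcond4} of that lemma correspond exactly to hypotheses \eqref{si4} and \eqref{si5} here. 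Now $\int_0^{t-s}T(\sigma)\,d\sigma$ maps $F$ into $D(A)$ with $A\int_0^{t-s}T(\sigma)\,d\sigma\, x = T(t-s)x-x$; applying Lemma~\ref{l:Aint=intA} to the process $s\mapsto\big(\int_0^{t-s}T(\sigma)\,d\sigma\big)\Phi(s)$ (whose integrability and that of its image under $A$ are supplied by \eqref{si4}, \eqref{si2} and \eqref{si1}) gives that $\int_0^t\int_0^r T(r-s)\Phi(s)\,dW_H(s)\,dr\in D(A)$ and that $A$ applied to it equals $\int_0^t T(t-s)\Phi(s)\,dW_H(s)-\int_0^t\Phi(s)\,dW_H(s)$. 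Adding the two pieces yields $A\int_0^t Y(s)\,ds = Y(t)-Y_0-\int_0^t\Phi(s)\,dW_H(s)$, which is the defining identity of Definition~\ref{d:strongsolSCP}.

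For the converse, suppose $Y$ is a generalized strong solution, so $Y(t)-Y_0=A\int_0^t Y(s)\,ds+\int_0^t\Phi(s)\,dW_H(s)$. The idea is to fix $t>0$, apply the bounded operator $T(t-r)$ and integrate in $r$ over $[0,t]$, using that $T(t-r)$ commutes with $A$ on $D(A)$ and with the stochastic integral (again Lemma~\ref{l:Aint=intA}, or directly since $T(t-r)$ is bounded), and then recognize the resulting expression via the stochastic Fubini theorem. Concretely, one writes $\int_0^t \frac{d}{dr}\big(T(t-r)\int_0^r Y(s)\,ds\big)\,dr$ and expands the derivative; the term $-AT(t-r)\int_0^r Y(s)\,ds$ cancels against $T(t-r)\big(Y(r)-Y_0-\int_0^r\Phi\,dW_H\big)$ coming from the right-hand side of the solution identity, and one is left, after a second application of stochastic Fubini, with $Y(t)=T(t)Y_0+\int_0^t T(t-s)\Phi(s)\,dW_H(s)$. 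A cleaner route, which I would prefer to write up, is to test against functionals $x^*\in D(A^*)$: for such $x^*$ the scalar process $\langle Y(t),x^*\rangle$ satisfies a genuine scalar identity, one computes $\frac{d}{dr}\langle T(t-r)^* x^*\,,\,\text{(solution)}\rangle$, integrates, and uses weak$^*$-separability of $F^*$ (as in the proof of Lemma~\ref{l:stochFubini}) to remove the null-set dependence on $x^*$; since $D(A^*)$ is weak$^*$-dense this pins down $Y(t)$.

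The main obstacle I anticipate is the bookkeeping of the measurability and $\gamma$-radonifying integrability conditions needed to legitimately invoke Lemma~\ref{l:stochFubini} and Lemma~\ref{l:Aint=intA} at each step — in particular verifying that the various composed processes ($s\mapsto\int_0^{t-s}T(\sigma)\,d\sigma\,\Phi(s)$, and $s\mapsto T(t-s)\Phi(s)$) are $H$-strongly measurable, adapted, and represent elements of the right $\gamma$-spaces a.s., which is exactly what hypotheses \eqref{si1}--\eqref{si4} and \eqref{si5} are designed to guarantee, but matching them up carefully requires care. The passage from the scalar identities (valid off an $x^*$-dependent null set) to an $F$-valued identity valid off a single null set is a recurring subtlety, handled each time by the weak$^*$-separability of $F^*$. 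Once these technical points are in place, the algebra of the two interchanges is routine.
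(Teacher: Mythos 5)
Your implication ``\eqref{varcon} $\Rightarrow$ generalized strong solution'' is correct and is essentially the paper's own argument: you integrate \eqref{varcon}, apply Lemma \ref{l:stochFubini} to $1_{\{s\le r\}}T(r-s)B(Y(s))$ (and your matching of conditions \eqref{intcond3}, \eqref{intcond4} of that lemma with hypotheses \eqref{si4} and \eqref{si5} is exactly right), and then pull out $A$ with Lemma \ref{l:Aint=intA} using $A\int_0^{t-s}T(\sigma)\,d\sigma=T(t-s)-I$; this is precisely how the paper obtains its identity \eqref{tvarcons_h12} and concludes that direction.

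The converse is where your proposal has a genuine gap. In your first variant, integrating $\frac{d}{dr}\bigl(T(t-r)\int_0^r Y(s)\,ds\bigr)$, inserting the solution identity and applying the stochastic Fubini theorem yields
\begin{align*}
\int_0^t Y(s)\,ds \;=\; \int_0^t T(s)Y_0\,ds+\int_0^t\Bigl(\int_0^{t-s}T(\sigma)\,d\sigma\Bigr)B(Y(s))\,dW_H(s),
\end{align*}
which is an identity for $\int_0^t Y\,ds$, not for $Y(t)$; it does not ``leave you with'' \eqref{varcon} as you claim. You still need to apply $A$ to both sides (legitimate by Lemma \ref{l:Aint=intA} together with \eqref{si1}, \eqref{si2}, \eqref{si4}) and then invoke the solution identity $A\int_0^t Y\,ds=Y(t)-Y_0-\int_0^t B(Y)\,dW_H$ once more; with that extra step your argument closes and becomes a rearrangement of the paper's proof, which instead substitutes the solution identity into $\int_0^t T(t-s)\int_0^s B(Y)\,dW_H\,ds$ and uses the identity \eqref{tvarcons_h11} plus a deterministic Fubini. (You should also justify that $r\mapsto T(t-r)\int_0^r Y\,ds$ is absolutely continuous before integrating its a.e.\ derivative.) Your preferred second variant, testing against $x^*\in D(A^*)$, is a genuinely different (Chojnowska-Michalik/Da Prato--Zabczyk style) route, but its central step is precisely what is missing: since $Y(r)$ contains a stochastic integral, ``computing $\frac{d}{dr}\langle T^*(t-r)x^*,\cdot\rangle$'' is an It\^o/integration-by-parts identity in which the time dependence sits in the functional, and in a general Banach space the adjoint semigroup is only weak$^*$-continuous, so $r\mapsto T^*(t-r)x^*$ need not be norm continuous and the usual reduction to functionals of the form $\varphi(r)x^*$ with $\varphi\in C^1$, $x^*\in D(A^*)$, requires an additional approximation (resolvent or sun-dual regularization). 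Until that is carried out, this direction is asserted rather than proved; the removal of the $x^*$-dependent null sets via a countable separating family in $D(A^*)$ is fine and parallels the paper's Lemma \ref{l:Aint=intA}.
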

\renewcommand\theenumi{\arabic{enumi}}
\begin{remark}\label{r:varcons}
\begin{enumerate}
\item If $Y$ is strongly measurable and adapted then the processes in \eqref{si1},  \eqref{si2} and  \eqref{si4} are $H$-strongly measurable and adapted.
\item \label{Bgamma} If $B:F\rightarrow \gamma(H,F)$ then for all $u\in[0,t]$ almost all paths $s\mapsto T(s)B(Y(u))$ are locally Bochner integrable in
$\gamma(H,F)$ because $B(Y(u))$ is the limit of finite-rank operators in $\gamma(H,F)$.
\item If $F$ is a \textsc{umd} space and $(T(s))_{0\leq s\leq t}$ is $\gamma$-bounded for all $t>0$ then \eqref{si2} and \eqref{si4} follow from \eqref{si1}.
For definition and details on $\gamma$-boundedness see \cite{vanNeervenVeraarWeis_SEEinUMD}, analytic semigroups are a typical example of $\gamma$-bounded
semigroups.
\end{enumerate} 
\end{remark}
\renewcommand\theenumi{\roman{enumi}}
\begin{proof}[Proof of Theorem \ref{t:varcons}]
\textbf{Step 1.}
We apply Lemmas \ref{l:stochFubini} and \ref{l:Aint=intA} to obtain the key equations for the proof of Theorem \ref{t:varcons}, equations \eqref{tvarcons_h11}
and \eqref{tvarcons_h12} below. As every adapted and measurable process with values in Polish space has a progressive version we may assume that $Y$ is
progressive. Consider the following process:
\begin{align*}
\Phi:[0,t]\times [0,t]\times \Omega\rightarrow \mL(H,F);\quad  \Phi(s,u,\omega) := 1_{u\leq s \leq t}T(t-s)B(Y(u)).
\end{align*}
Because $Y$ is strongly measurable, and because $B:F\rightarrow \mL(H,F)$ is continuous with respect to the strong operator topology and the semigroup $T(s)$ is
strongly continuous it follows that $\Phi$ is $H$-strongly measurable. Similarly, it follows from the fact that $Y$ is progressive that for all $s\in [0,t]$ and
all $h\in H$ the section $\Phi_s h$ is progressive. Thus conditions \eqref{meascond1} and \eqref{meascond2} of Lemma \ref{l:stochFubini} are satisfied. One
easily checks that condition \eqref{intcond1} of Lemma \ref{l:stochFubini} is satisfied by $\Phi$. Condition \eqref{intcond3} in Lemma \ref{l:stochFubini}
follows from assumption \eqref{si4}. Condition \eqref{intcond4} in Lemma \ref{l:stochFubini} follows from the definition of $\gamma(0,t;H,F)$, assumption
\eqref{si1} and the exponential boundedness of the semigroup: let $(h_k)_{k=1}^{n}$ be an arbitrary orthonormal sequence in $L^2(0,t;H)$, then
\begin{align*}
&\int_{0}^t\Big(\E\ \Big\n \sum_{k=1}^{n} \gamma_k  \int_{0}^t T(t-s)B(Y(u))h_k(u)1_{[0,s]}(u) du \Big\n_F^2\Big)^{\inv{2}}ds \\
& \qquad\qquad \leq  \int_{0}^t \n T(t-s) \n_{\mathcal{L}(F)} \Big( \E \ \Big\n \sum_{k=1}^n \gamma_k \int_{0}^t B(Y(u))h_k(u)1_{[0,s]}(u)du \Big\n_F^2
\Big)^{\inv{2}}ds\\
&\qquad\qquad\leq M_t \int_{0}^t \n B(Y)1_{[0,s])} \n_{\gamma(0,t;H,F)}ds \leq tM_t \n B(Y) \n_{\gamma(0,t;H,F)} < \infty,
\end{align*}
where $M_t:=\sup_{0\leq s\leq t}\n T(s) \n_{\mathcal{L}(F)}$ and we used the domination principle for Gaussian random variables to see that $\n B(Y)1_{[0,s]}
\n_{\gamma(0,t;H,F)}\leq \n B(Y) \n_{\gamma(0,t;H,F)}.$ Thus $\Phi$ satisfies all the conditions of the stochastic Fubini Lemma and we obtain:
\begin{align}
\int_{0}^{t} T(t-s) \int_{0}^{s} B(Y(u)) dW_H(u) ds & = \int_{0}^{t} \int_{u}^{t} T(t-s)B(Y(u)) dsdW_H(u) \  \textrm{a.s.} \label{tvarcons_h1a}
\end{align}
Observe that for all $h\in H$ one has $\int_{u}^{t} T(t-s)B(Y(u)) h ds \in D(A)$.
Hence by assumptions \eqref{si1} and \eqref{si2} we can apply Lemma \ref{l:Aint=intA} to obtain that the stochastic integral on the right-hand side of equation
\eqref{tvarcons_h1a} above is in $D(A)$ a.s., and we have:
\begin{align} 
A\int_{0}^{t} \int_{u}^{t} T(t-s)B(Y(u)) dsdW_H(u)&=\int_{0}^{t} A \int_{0}^{t-u} T(s)B(Y(u)) dsdW_H(u) \notag \\
& = \int_{0}^{t} (T(t-u)-I)B(Y(u))dW_H(u) \quad \textrm{a.s.} \label{tvarcons_h1b}
\end{align}
Combining equations \eqref{tvarcons_h1a} and \eqref{tvarcons_h1b} we obtain:
\begin{align}\label{tvarcons_h11}
&A\int_{0}^{t} T(t-s) \int_{0}^{s} B(Y(u)) dW_H(u) ds =\int_{0}^{t} (T(t-u)-I)B(Y(u))dW_H(u) \quad \textrm{a.s.}
\end{align}\par
Similarly using assumption \eqref{si5} one can prove that for $0\leq s \leq t$ the stochastic integrals in the equation below are well-defined and one has the following identity:
\begin{align}\label{tvarcons_h12}
&A\int_{0}^{t} \int_{0}^{s} T(s-u) B(Y(u)) dW_H(u) ds =\int_{0}^{t} (T(t-u)-I)B(Y(u))dW_H(u).
\end{align}
\textbf{Step 2.}
Assume $Y$ is a generalized strong solution to \eqref{SCP}, we prove that \eqref{varcon} holds. By \eqref{tvarcons_h11} and by the definition of a generalized
strong solution we have:
\begin{align*}
&Y(t)-Y_0-A\int_{0}^{t} Y(s) ds = \int_{0}^{t} B(Y(s)) dW_H(s)\\
&\qquad= \int_{0}^{t} T(t-s)B(Y(s)) dW_H(s) - A\int_{0}^{t} T(t-s) \int_{0}^{s} B(Y(u)) dW_H(u)ds.
\end{align*}
Let us consider the final term above without the $A$. By assumption and by Fubini's theorem one has:
\begin{align*}
&\int_{0}^{t} T(t-s) \int_{0}^{s} B(Y(u)) dW_H(u)ds\\
&\qquad = \int_{0}^{t} T(t-s) \left[Y(s)-Y_0-A\int_{0}^{s} Y(u) du\right] ds \\
&\qquad = \int_{0}^{t} T(t-s) Y(s) ds - \int_{0}^{t} T(t-s) Y_0 ds - A \int_{0}^{t} \int_{u}^{t} T(t-s)Y(u) dsdu\\
&\qquad = - \int_{0}^{t} T(t-s) Y_0 ds + \int_{0}^{t} Y(s) ds,
\end{align*}
which, when substituted to the earlier equation, gives:
\begin{align*}
&Y(t)-Y_0-A\int_{0}^{t} Y(s) ds \\
&\qquad = \int_{0}^{t} T(t-s)B(Y(s)) dW_H(s) +T(t)Y_0 - Y_0 - A\int_{0}^{t} Y(s)ds.
\end{align*}
On the other hand, if $Y$ satisfies \eqref{varcon}, then $\int_{0}^{t}Y(s)ds$ exists and is in $D(A)$ a.s.\ by \eqref{tvarcons_h12}, and therefore using this
equation we obtain:
\begin{align*}
& A\int_{0}^{t} Y(s) ds = A\int_{0}^{t} T(s)Y_0 ds + A\int_{0}^{t} \int_{0}^{s}T(s-u)B(Y(u))dW_H(u)ds\\
&\qquad = T(t)Y_0 - Y_0 + \int_{0}^{t} \left[T(t-u)-1\right] B(Y(u)) dW_H(u)\\
&\qquad = Y(t)-Y_0 -\int_{0}^{t} B(Y(u))dW_H(u).
\end{align*}
\end{proof}
Continuity of a process satisfying \eqref{varcon} can be proved by means of the factorization method as introduced in Section 2 of \cite{DaKwaZab87}. We give
the proof below; it is a straightforward adaptation of the proof of Theorem 3.3 in \cite{VerZim:08}.
\begin{thm}\label{t:reg}
Let $(T(t))_{t\geq 0}$ be a semigroup on a Banach space $F$ with the decoupling property. Let $Z:[0,t]\times \Omega\to \mL(H,F)$ be an $H$-strongly measurable
adapted process. Suppose that there exists $\alpha,p>0$, $\inv{p}<\alpha<\inv{2}$ and $M>0$ such that
\begin{align}\label{gbddin}
\sup_{0\leq s\leq t} \n u\mapsto (s-u)^{-\alpha} T(s-u) Z(u) \n_{L^p(\Omega;\gamma(0,s,H;F))} &\leq M.
\end{align}
Then the process
\begin{align*}
s\mapsto \int_{0}^{s} T(s-u) Z(u) dW_H(u)
\end{align*}
is well-defined and has a version with continuous paths. Moreover we have
$$ \E \ \sup_{0\leq s\leq t}\Big\n\int_{0}^{s} T(s-u) Z(u) dW_H(u)\Big\n_F^p<\infty. $$
\end{thm}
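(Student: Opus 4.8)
The plan is to carry out the classical factorization argument of Da Prato--Kwapie\'n--Zabczyk in the Banach-space / $\gamma$-radonifying setting, exactly as in the proof of Theorem 3.3 in \cite{VerZim:08}. The starting point is the elementary identity
\begin{align*}
\int_r^s (s-u)^{\alpha-1}(u-r)^{-\alpha}\,du = \frac{\pi}{\sin(\pi\alpha)}, \qquad 0\le r<s,
\end{align*}
valid for $0<\alpha<1$, which lets one write the stochastic convolution as a deterministic convolution of a new process. Concretely, I would define
\begin{align*}
\zeta(r) := \int_0^r (r-u)^{-\alpha} T(r-u) Z(u)\,dW_H(u),
\end{align*}
which is well-defined in $L^p(\Omega;F)$ for a.e.\ $r$ by assumption \eqref{gbddin} together with the two-sided (resp.\ one-sided) estimate \eqref{BDG}/\eqref{BDG2} of Theorem \ref{t:stochint} and Remark \ref{r:stochint_umdmin}, since $F$ has the decoupling property. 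Using the semigroup law $T(s-u) = T(s-r)T(r-u)$, a stochastic Fubini argument (Lemma \ref{l:stochFubini}) interchanging the $dr$-integral against $(s-r)^{\alpha-1}$ with the $dW_H$-integral then yields, for a.e.\ $s$,
\begin{align*}
\int_0^s T(s-u) Z(u)\,dW_H(u) = \frac{\sin(\pi\alpha)}{\pi}\int_0^s (s-r)^{\alpha-1} T(s-r)\zeta(r)\,dr \quad \textrm{a.s.}
\end{align*}
So the stochastic convolution equals, up to a constant, the deterministic operator $R_\alpha$ applied to $\zeta$, where $(R_\alpha g)(s) := \int_0^s (s-r)^{\alpha-1}T(s-r)g(r)\,dr$.

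The second ingredient is a purely analytic (non-probabilistic) mapping property: for $\alpha>1/p$ the operator $R_\alpha$ maps $L^p(0,t;F)$ continuously into $C([0,t];F)$. This follows from H\"older's inequality, since $r\mapsto \|T(r)\|_{\mathcal L(F)}$ is bounded on $[0,t]$ (exponential boundedness of the $C_0$-semigroup) and $s\mapsto s^{\alpha-1}\in L^{p'}(0,t)$ precisely when $(1-\alpha)p'<1$, i.e.\ $\alpha>1/p$; the continuity of $s\mapsto (R_\alpha g)(s)$ is then obtained by approximating $g$ by continuous functions and using strong continuity of the semigroup together with the dominated convergence theorem. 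Combining the two ingredients: by \eqref{gbddin} and \eqref{BDG}/\eqref{BDG2} one has $\zeta\in L^p(\Omega;L^p(0,t;F))$, hence $\zeta(\omega)\in L^p(0,t;F)$ for a.e.\ $\omega$, so $R_\alpha\zeta(\omega)\in C([0,t];F)$ for a.e.\ $\omega$; this continuous process is the desired version of the stochastic convolution, and the moment bound
\begin{align*}
\E\ \sup_{0\le s\le t}\Big\|\int_0^s T(s-u)Z(u)\,dW_H(u)\Big\|_F^p \lesssim_{\alpha,p,t} \E\ \|\zeta\|_{L^p(0,t;F)}^p = \int_0^t \E\ \|\zeta(r)\|_F^p\,dr < \infty
\end{align*}
follows from the operator norm of $R_\alpha: L^p(0,t;F)\to C([0,t];F)$ and again \eqref{gbddin}.

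The main obstacle, and the step requiring the most care, is the stochastic Fubini interchange: one must verify the measurability hypotheses \eqref{meascond1}--\eqref{meascond2} and the integrability hypotheses \eqref{intcond1},\eqref{intcond3},\eqref{intcond4} of Lemma \ref{l:stochFubini} for the process $(s,r,\omega)\mapsto 1_{\{r\le s\}}(s-r)^{\alpha-1}(r-u)^{-\alpha}T(s-u)Z(u)$ (or rather its avatar after absorbing the inner integral), on the appropriate product measure space $S=(0,s)$ with $d\mu(r) = (s-r)^{\alpha-1}\,dr$ (which is finite since $\alpha<1$, hence $\alpha-1>-1$). The $\gamma$-norm bounds needed for \eqref{intcond3} and \eqref{intcond4} are where assumption \eqref{gbddin} and the ideal property of $\gamma$-radonifying operators, together with the domination principle for Gaussian sums used in the proof of Theorem \ref{t:varcons}, enter; the constraint $\alpha<1/2$ ensures the singularity $(r-u)^{-\alpha}$ is $\gamma$-square-integrable near the diagonal so that $\zeta(r)$ genuinely defines an element of $L^p(\Omega;F)$. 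Once the Fubini step is justified, the remainder is routine.
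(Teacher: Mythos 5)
Your proposal is correct and follows essentially the same route as the paper: the Da Prato--Kwapie\'n--Zabczyk factorization with the auxiliary process $\zeta(r)=\int_0^r(r-u)^{-\alpha}T(r-u)Z(u)\,dW_H(u)$, the assumption \eqref{gbddin} combined with the one-sided estimate \eqref{BDG2} to place $\zeta$ in $L^p(\Omega;L^p(0,t;F))$, the bounded operator $R_\alpha:L^p(0,t;F)\to C([0,t];F)$, and a stochastic Fubini argument for the factorization identity. The only differences are implementation details: the paper cites \cite{DaKwaZab87} for the mapping property of $R_\alpha$ and checks the factorization identity scalarly via the Fubini theorem of \cite{NeerVer} (and uses the Gaussian domination principle to get well-definedness of the unweighted convolution directly), whereas you prove the $R_\alpha$ bound by H\"older's inequality and invoke the vector-valued Lemma \ref{l:stochFubini}; note also that well-definedness of $\zeta(r)$ needs only \eqref{gbddin} itself, not the constraint $\alpha<\tfrac12$, and that the factorization identity should be established for every fixed $s$ (a.s.), not merely almost every $s$, to conclude that $R_\alpha\zeta$ is a continuous version.
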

Before giving the proof of this theorem we mention the following corollary:
\begin{corol}\label{c:contSCP}
Consider the stochastic Cauchy problem \eqref{SCP} set in a Banach space $F$ that satisfies the decoupling property. The process $Y:[0,t]\times \Omega
\rightarrow F$ satisfying the variation of constants formula \eqref{varcon} belongs to $L^p(\Omega;C([0,t];F))$ if there exists $\alpha,p>0$,
$\inv{p}<\alpha<\frac{1}{2}$ such that
\begin{align*}
\sup_{0\leq s\leq t} \n u\mapsto (s-u)^{-\alpha} T(s-u) Y(u) \n_{L^p(\Omega;\gamma(0,s,H;F))} &<\infty.
\end{align*}
\end{corol}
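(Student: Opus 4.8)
The plan is to recognize Corollary~\ref{c:contSCP} as essentially a restatement of Theorem~\ref{t:reg} once the deterministic term is split off from the variation of constants formula. For the process $Y$ satisfying \eqref{varcon} I would write
\begin{align*}
Y(t)=T(t)Y_0+\Psi(t),\qquad \Psi(t):=\int_0^t T(t-s)B(Y(s))\,dW_H(s),
\end{align*}
handle the two summands separately, and then add them.

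For the first summand, $t\mapsto T(t)Y_0$ has continuous paths by the strong continuity of $(T(t))_{t\ge0}$, and since $M_t:=\sup_{0\le s\le t}\n T(s)\n_{\mL(F)}<\infty$ by exponential boundedness one has $\sup_{0\le s\le t}\n T(s)Y_0\n_F\le M_t\n Y_0\n_F$; hence $T(\cdot)Y_0\in L^p(\Omega;C([0,t];F))$ as soon as $Y_0\in L^p(\Omega;F)$, which I take as part of the standing hypotheses (in the application to \eqref{SDCP} one has $p=2$ and $Y_0=[x_0,f_0]\in L^2(\F_0,\mathcal{E}^p(E))$). For the second summand, I would first note that $Z:=B\circ Y:[0,t]\times\Omega\to\mL(H,F)$ is $H$-strongly measurable and adapted: this follows exactly as in Remark~\ref{r:varcons}(1) from the strong measurability and adaptedness of $Y$ together with the continuity of $B$ for the strong operator topology. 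The assumption displayed in the corollary is then precisely condition~\eqref{gbddin} of Theorem~\ref{t:reg} for this $Z$ (the expression $T(s-u)Y(u)$ in that display being read as the operator $h\mapsto T(s-u)B(Y(u))h$), so Theorem~\ref{t:reg} applies and yields that $\Psi$ is well-defined, admits a version $\widetilde\Psi$ with continuous paths, and satisfies $\E\sup_{0\le s\le t}\n\widetilde\Psi(s)\n_F^p<\infty$, i.e.\ $\widetilde\Psi\in L^p(\Omega;C([0,t];F))$.

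Finally I would observe that $\widetilde Y(t):=T(t)Y_0+\widetilde\Psi(t)$ has continuous paths, belongs to $L^p(\Omega;C([0,t];F))$ by the elementary estimate $\n a+b\n^p\le 2^{p-1}(\n a\n^p+\n b\n^p)$, and agrees almost surely with $Y(t)$ for each $t$ by \eqref{varcon}; hence $Y$ has a version in $L^p(\Omega;C([0,t];F))$, which is the assertion. The argument is short and its substance is entirely contained in Theorem~\ref{t:reg}; the only points needing a word of care are the verification that $B\circ Y$ inherits $H$-strong measurability and adaptedness from $Y$, and the (implicit) requirement $Y_0\in L^p(\Omega;F)$ used to control the deterministic term — this is the closest thing to an obstacle, and it is a bookkeeping matter rather than a genuine difficulty.
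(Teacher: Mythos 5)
Your argument is correct and is exactly the deduction the paper intends: Corollary \ref{c:contSCP} is stated as an immediate consequence of Theorem \ref{t:reg} applied with $Z=B(Y)$ (the corollary's display should indeed be read as $T(s-u)B(Y(u))$), combined with the trivial treatment of the deterministic term $T(\cdot)Y_0$. Your remark that $Y_0\in L^p(\Omega;F)$ is implicitly needed to place $T(\cdot)Y_0$ in $L^p(\Omega;C([0,t];F))$ is a fair reading of a hypothesis the corollary leaves tacit, and it holds in the paper's application (Theorem \ref{t:regSDCP}).
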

\begin{proof}[Proof of Theorem \ref{t:reg}]
By assumption \eqref{gbddin} and Theorem \ref{t:stochint} it follows that for all $s\in [0,t]$ we can define
\begin{align*}
\Psi_1(s):=\int_{0}^{s} (s-u)^{-\alpha} T(s-u)Z(u) dW_H(u).
\end{align*}
By Proposition A.1 in \cite{vanNeervenVeraarWeis_SEEinUMD} the process $\Phi_1$ has a version which is adapted and strongly measurable. Moreover, by assumption
and inequality \eqref{BDG2} one has, for all $s\in [0,t]$,
\begin{align}\label{Lpest}
\E\ \n \Psi_1 (s)\n_F^p & \leq M,
\end{align}
whence $\Psi_1\in L^p(0,t;L^p(\Omega;F))$, and thus, by Fubini, $\Psi_1\in L^p(\Omega;L^p(0,t;F))$. Let $\Omega_0\subset \Omega$ denote the set on which
$\Psi_1\in L^p(0,t;F)$; we have $\P(\Omega_0)=1$.\par
By the domination principle for Gaussian random variables (see also \cite[Corollary 4.4]{vanNeervenVeraarWeis2}) it follows that for all $s\in [0,t]$ one has,
almost surely, 
$$ \n u\mapsto T(s-u) Z(u,\omega) \n_{\gamma(0,s,H;F)}\leq \n u\mapsto t^\alpha(s-u)^{-\alpha} T(s-u) Z(u,\omega) \n_{\gamma(0,s,H;F)} \quad \textrm{a.s.}$$
Thus by assumption we can define, for all $s\in [0,t]$,
\begin{align*}
\Psi_2(s) :=\int_{0}^{s} T(s-u)Z(u) dW_H(u),
\end{align*}
which again has a version that is adapted and strongly measurable.\par
It is proved in \cite{DaKwaZab87} that one may define a bounded operator $R_{\alpha}:L^p(0,t;F)\rightarrow C([0,t];F)$ by setting
\begin{align*}
(R_{\alpha}f)(s) & := \int_{0}^{s} (s-u)^{\alpha-1}T(s-u)f(u)du.
\end{align*}
Thus it remains to show that for almost all $\omega \in \Omega_0$ one has that for all $s\in [0,t]$ that
\begin{align}\label{fac}
\Psi_2(s) = \frac{\sin \pi \alpha}{\pi} (R_{\alpha}\Psi_1)(s),
\end{align}
i.e.\ that for all $x^*\in F^*$ one has
\begin{align*}
\langle \Psi_2(s), x^*\rangle & = \frac{\sin \pi \alpha}{\pi} \int_{0}^{s} (s-u)^{\alpha-1} \langle T(s-u)\Psi_1 (u), x^* \rangle du \quad \textrm{a.s.}
\end{align*}
This follows from a Fubini argument, see \cite[Theorem 3.5]{NeerVer} and \cite{DaKwaZab87}. The conditions necessary to apply the Fubini Theorem follow from the assumption \eqref{gbddin}. By \eqref{fac} and \eqref{Lpest} one has
$$  \E \sup_{0\leq s\leq t}\n\Psi_2(s)\n_F^p\leq C \E \int_0^t\n\Psi_1(s)\n_F^pds \leq tCM,$$
where $C$ is independent of $Z$. Thus the final estimate follows.
\end{proof}
\section{The Stochastic Delay Equation}\label{s:SDE}
\subsection{The variation of constants formula}
We now turn to the stochastic delay equation \eqref{SDE} as presented in the introduction and the related stochastic Cauchy problem \eqref{SDCP} on page \pageref{SDCP}. Recall that we assumed that \eqref{SDE} is set in a type 2 \textsc{umd} Banach space $E$ and that the related Cauchy problem is set in $\mathcal{E}^p(E)= E \times L^p(-1,0;E)$ for some $p\in [1,\infty)$. (The results in this article are also valid if $E$ is a type 2 Banach space with the decoupling property but we do not know of any such spaces that are not in fact \textsc{umd} spaces.)\par 
Recall that a Banach space $F$ is said to have {\em type $p\in [1,2]$} if there
exists a constant $C\ge 0$ such that for all
finite choices $x_1,\dots, x_k\in F$ we have
$$ \Big(\E\ \Big\n \sum_{j=1}^k \gamma_j x_j \Big\n_F^2\Big)^\frac12 \le C
\Big(\sum_{j=1}^k \n x_j\n_F^p\Big)^\frac1p,$$
where $(\gamma_j)_{j\geq 1}$ is a sequence of independent standard Gaussians. Hilbert spaces have type $2$ and $L^p$-spaces with $p\in [1,\infty)$ have type $\min\{p,2\}$. We refer to \cite{AlbKal} for more information, for our purposes we only need that in Banach spaces with type 2 the following embedding holds, see p.\ 1460 in \cite{vanNeervenVeraarWeis}:
\begin{align}\label{emb}
L^2(0,t;\gamma(H,F))\hookrightarrow \gamma(0,t;H,F).
\end{align}\par
Let $(\mathcal{T}(t))_{t\geq 0}$ denote the semigroup generated by $\mathcal{A}$, where $\mathcal{A}$ is the operator in \eqref{SDCP} defined by \eqref{opA} in the introduction. We define the projections $\pi_1:\mathcal{E}^p(E)\rightarrow E$ and $\pi_2 : \mathcal{E}^p(E) \rightarrow L^p(-1,0;E) $ as follows:
\begin{align*}
\pi_1 \left[\begin{array}{c} x \\ f \end{array}\right]&=x;& \pi_2 \left[\begin{array}{c} x \\ f \end{array}\right]&=f.
\end{align*} 
The following property of $(\mathcal{T}(t))_{t\geq 0}$ is intuitively obvious and useful in the following:
\begin{align}\label{propT}
\left(\pi_2 \mathcal{T}(t)\left[\begin{array}{c} x \\ f \end{array}\right]\right)(u) &= \pi_1 \mathcal{T}(t+u)\left[\begin{array}{c} x \\ f \end{array}\right].
\end{align}
for $f\in \mathcal{E}^p(E), u\in[-1,0], t>-u$ (for a proof see \cite{batkaiPiazzera}, Proposition 3.11). \par
The proof of the following lemma is straightforward and thus left to the reader:
\begin{lemma}\label{l:intex}
Let $t>0$, $p\in [1,\infty)$ and $x\in L^{p}(-1,t;E)$. Then the function $y:[0,t]\rightarrow L^{p}(-1,0;E)$, $y(s):=x_s$ is (Bochner) integrable and
\begin{align*}
\int_{0}^{t} y(s) ds & \in W^{1,p}(-1,0;E);\\
\left(\int_{0}^{t} y(s) ds\right)(u)&= \int_{0}^{t} x(s+u) ds \quad a.s.; &\left(\int_{0}^{t} y(s) ds\right)^{'} &=y(t)-y(0) \quad \textrm{a.s.}
\end{align*}
\end{lemma}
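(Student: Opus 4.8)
The plan is to establish Bochner integrability of $y$ by hand, to identify $\int_0^t y(s)\,ds$ by testing against functionals, and then to read off the $W^{1,p}$-regularity and the formula for the weak derivative by reducing to the elementary case of a continuous $x$.

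First I would extend $x$ by zero to an element of $L^p(\R;E)$. Since $p<\infty$ the translation group is strongly continuous on $L^p(\R;E)$, so $s\mapsto x(s+\cdot)$ is continuous from $[0,t]$ into $L^p(\R;E)$; composing with the norm-one restriction operator onto $L^p(-1,0;E)$ shows that $y:[0,t]\to L^p(-1,0;E)$ is continuous, hence strongly measurable. The substitution $v=s+u$ gives $\n y(s)\n_{L^p(-1,0;E)}^p=\int_{s-1}^{s}\n x(v)\n_E^p\,dv\le\n x\n_{L^p(-1,t;E)}^p$ for every $s\in[0,t]$, so $\int_0^t\n y(s)\n_{L^p(-1,0;E)}\,ds\le t\n x\n_{L^p(-1,t;E)}<\infty$ and $y$ is Bochner integrable.

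Next, set $z(u):=\int_0^t x(s+u)\,ds$, which by the same substitution equals $\int_u^{u+t}x(v)\,dv$; this is defined for a.e.\ $u\in(-1,0)$ and belongs to $L^p(-1,0;E)$. To see that $\int_0^t y(s)\,ds=z$ it is enough to test against the functionals $f\mapsto\int_{-1}^0\langle f(u),e^*\rangle g(u)\,du$ with $e^*\in E^*$ and $g\in L^{q}(-1,0)$, $1/p+1/q=1$, since these separate the points of $L^p(-1,0;E)$. Pulling such a functional through the Bochner integral and then applying the scalar Fubini theorem --- legitimate by the estimate of the previous paragraph together with H\"older's inequality --- turns $\int_0^t\int_{-1}^0\langle x(s+u),e^*\rangle g(u)\,du\,ds$ into $\int_{-1}^0\langle z(u),e^*\rangle g(u)\,du$, which is the asserted identity for $\bigl(\int_0^t y(s)\,ds\bigr)(u)$.

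Finally, the three maps $x\mapsto z$, $x\mapsto x_t$ and $x\mapsto x_0$ are bounded linear operators from $L^p(-1,t;E)$ to $L^p(-1,0;E)$ by the estimate above. For $x\in C([-1,t];E)$, a dense subspace, the function $z(u)=\int_u^{u+t}x(v)\,dv$ is continuously differentiable on $[-1,0]$ with $z'(u)=x(u+t)-x(u)=x_t(u)-x_0(u)$ by the fundamental theorem of calculus, so $z\in W^{1,p}(-1,0;E)$ and $z'=x_t-x_0$. Because the graph $\{(f,f'):f\in W^{1,p}(-1,0;E)\}$ is closed in $L^p(-1,0;E)\times L^p(-1,0;E)$, approximating a general $x$ by continuous functions and using the continuity of the three operators yields $z\in W^{1,p}(-1,0;E)$ and $z'=x_t-x_0=y(t)-y(0)$ for every $x$; together with the previous paragraph this is exactly the lemma. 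I expect no genuine obstacle here --- this is the routine statement the authors leave to the reader --- but the two interchange-of-integration steps (the scalar Fubini theorem when identifying the integral, and the density/closed-graph argument standing in for a direct weak-derivative computation) deserve a little care, and both are controlled by the single a priori bound $\n x_s\n_{L^p(-1,0;E)}\le\n x\n_{L^p(-1,t;E)}$. One could instead compute the weak derivative of $z$ directly for general $x$ by a Fubini argument over $\{(u,v):u\le v\le u+t\}$, but the reduction to continuous $x$ is shorter.
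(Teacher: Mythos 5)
The paper offers no proof of this lemma (it is explicitly left to the reader), and your argument supplies a correct and complete one: the translation-continuity plus the bound $\|x_s\|_{L^p(-1,0;E)}\le\|x\|_{L^p(-1,t;E)}$ gives Bochner integrability, testing against the separating functionals $g\otimes e^*$ with a scalar Fubini argument identifies $\int_0^t y(s)\,ds$ with $u\mapsto\int_u^{u+t}x(v)\,dv$, and the reduction to continuous $x$ via boundedness of the three operators and closedness of the weak-derivative graph yields the $W^{1,p}$ membership and the formula for the derivative. This is exactly the routine argument the authors intend, so nothing further is needed.
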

Generalized strong solutions to \eqref{SDCP} are equivalent to mild solutions:
\begin{thm}\label{t:Dvarcons}
Let $E$ be a type $2$ \textsc{umd} Banach space and let $p\in [1,\infty)$. Consider \eqref{SDCP}; i.e.\ let $\mathcal{A}$ defined by \eqref{opA} be the generator of the
$C_0$-semigroup $(\mathcal{T}(t))_{t\geq 0}$ on $\mathcal{E}^p(E)=E\times L^p(-1,0;E)$. Let $\mathcal{B}:\mathcal{E}^p(E)\rightarrow
\gamma(H,\mathcal{E}^p(E))$ be given by $\mathcal{B}([x,f]^{T})= [B([x,f]),0]^T$, where $B:\mathcal{E}^p(E) \rightarrow \gamma(H,E)$ is Lipschitz continuous.
Finally, let $W_H$ be an $H$-cylindrical Brownian motion adapted to $(\F_s)_{s\geq 0}$.\par
Let $Y:[0,\infty)\times \Omega \rightarrow \mathcal{E}^p(E)$ be a strongly measurable, adapted process satisfying 
\begin{align*}
\int_{0}^{t} \n Y(s)\n_{\mathcal{E}^p(E)}^{2} ds < \infty \quad \textrm{a.s.\ for all } t>0; 
\end{align*}
Then  $Y$ is a generalized strong solution to \eqref{SDCP} if and only if $Y$ is a solution to: 
\begin{align}\label{Dvarcon}
Y(t) &= \mathcal{T}(t)\vectwo{x_0}{f_0} + \int_{0}^{t} \mathcal{T}(t-s)\mathcal{B}(Y(s))dW_H(s) \quad \textrm{a.s.\ for all }t\geq 0.
\end{align}
\end{thm}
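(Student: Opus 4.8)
The plan is to read off Theorem~\ref{t:Dvarcons} from Theorem~\ref{t:varcons}, applied to the abstract stochastic Cauchy problem \eqref{SCP} in the Banach space $F=\mathcal{E}^p(E)$ with generator $\mathcal{A}$, noise coefficient $\mathcal{B}$, and semigroup $(\mathcal{T}(t))_{t\geq 0}$. For this to be legitimate one first records that $\mathcal{E}^p(E)$ has the decoupling property: it is \textsc{umd} for $p\in(1,\infty)$ and satisfies the decoupling inequality for $p=1$ (Remark~\ref{r:stochint_umdmin}), so the implications of Theorem~\ref{t:stochint} used inside the proof of Theorem~\ref{t:varcons} are available in either case. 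One also notes that the hypothesis $\int_0^t\n Y(s)\n_{\mathcal{E}^p(E)}^2\,ds<\infty$ a.s.\ forces, via Cauchy--Schwarz, $\int_0^t\n Y(s)\n_{\mathcal{E}^p(E)}\,ds<\infty$ a.s., so $Y$ is a.s.\ locally Bochner integrable and the solution concept of Definition~\ref{d:strongsolSCP} applies to it.

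The core of the argument is to verify the integrability hypotheses \eqref{si1}, \eqref{si2}, \eqref{si4} and \eqref{si5} of Theorem~\ref{t:varcons} for the process $Y$. The decisive tool is the type~2 embedding \eqref{emb}, i.e.\ $L^2(0,t;\gamma(H,\mathcal{E}^p(E)))\hookrightarrow\gamma(0,t;H,\mathcal{E}^p(E))$, which reduces each hypothesis to a Bochner $L^2$-estimate for a $\gamma(H,\mathcal{E}^p(E))$-valued map on $[0,t]$. Two elementary ingredients feed these estimates: the Lipschitz property of $B$, which yields $\n\mathcal{B}(y)\n_{\gamma(H,\mathcal{E}^p(E))}\eqsim\n B(y)\n_{\gamma(H,E)}\leq\n B(0)\n_{\gamma(H,E)}+L\n y\n_{\mathcal{E}^p(E)}$ for $y\in\mathcal{E}^p(E)$; and the exponential boundedness of $(\mathcal{T}(t))_{t\geq 0}$, which gives $M_t:=\sup_{0\leq s\leq t}\n\mathcal{T}(s)\n_{\mL(\mathcal{E}^p(E))}<\infty$. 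Strong measurability and adaptedness of the processes in \eqref{si1}, \eqref{si2}, \eqref{si4} follow from Remark~\ref{r:varcons} and the strong measurability and adaptedness of $Y$; that $s\mapsto\mathcal{B}(Y(s))$ is in addition strongly measurable as a $\gamma(H,\mathcal{E}^p(E))$-valued map---so that its $L^2$-Bochner norm makes sense---follows because $B$ is continuous into $\gamma(H,E)$ and $Y$ is strongly measurable.

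With these in hand one obtains, almost surely, $\n\mathcal{B}(Y(\cdot))\n_{L^2(0,t;\gamma(H,\mathcal{E}^p(E)))}\lesssim t^{1/2}\n B(0)\n_{\gamma(H,E)}+L(\int_0^t\n Y(s)\n_{\mathcal{E}^p(E)}^2\,ds)^{1/2}<\infty$, which gives \eqref{si1} via \eqref{emb}; pointwise in $u$ one has $\n\mathcal{T}(t-u)\mathcal{B}(Y(u))\n_{\gamma(H,\mathcal{E}^p(E))}\leq M_t\n\mathcal{B}(Y(u))\n_{\gamma(H,\mathcal{E}^p(E))}$ and $\n\int_0^{t-u}\mathcal{T}(s)\mathcal{B}(Y(u))\,ds\n_{\gamma(H,\mathcal{E}^p(E))}\leq tM_t\n\mathcal{B}(Y(u))\n_{\gamma(H,\mathcal{E}^p(E))}$, so squaring, integrating over $[0,t]$ and applying \eqref{emb} yields \eqref{si2} and \eqref{si4}. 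For \eqref{si5} one bounds, for every $s\in[0,t]$, $\n\mathcal{T}(s-\cdot)\mathcal{B}(Y(\cdot))\n_{\gamma(0,s,H;\mathcal{E}^p(E))}\leq M_t\n\mathcal{B}(Y(\cdot))\n_{L^2(0,t;\gamma(H,\mathcal{E}^p(E)))}$, a quantity that is a.s.\ finite and independent of $s$, hence integrable over $s\in[0,t]$. Once all hypotheses are verified, Theorem~\ref{t:varcons} states exactly that $Y$ is a generalized strong solution to \eqref{SDCP} if and only if $Y$ satisfies \eqref{varcon}, i.e.\ \eqref{Dvarcon}, which is the claim.

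I expect the only genuinely delicate points to be the measurability bookkeeping for the $\gamma(H,\mathcal{E}^p(E))$-valued maps---needed so that \eqref{emb} may legitimately be invoked---and, in the case $p=1$, checking that each implication of Theorem~\ref{t:stochint} used through Theorem~\ref{t:varcons} is one that remains valid in a space that is merely decoupling rather than \textsc{umd}. The $\gamma$-norm estimates themselves are routine consequences of \eqref{emb}, the Lipschitz bound on $B$, and the exponential boundedness of $(\mathcal{T}(t))_{t\geq 0}$.
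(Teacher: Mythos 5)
Your overall strategy is the same as the paper's: check hypotheses \eqref{si1}, \eqref{si2}, \eqref{si4}, \eqref{si5} of Theorem \ref{t:varcons} for $F=\mathcal{E}^p(E)$ and then quote that theorem. However, there is a genuine gap in how you verify them: you invoke the type 2 embedding \eqref{emb} with $F=\mathcal{E}^p(E)$, i.e.\ $L^2(0,t;\gamma(H,\mathcal{E}^p(E)))\hookrightarrow\gamma(0,t;H,\mathcal{E}^p(E))$. That embedding is only available when the target space has type $2$, and $\mathcal{E}^p(E)=E\times L^p(-1,0;E)$ has type $\min\{p,2\}$ only; for $p\in[1,2)$ -- in particular the case $p=1$ that the paper explicitly wants for $L^1$ initial data -- it fails. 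Your estimate for \eqref{si1} survives, because $\mathcal{B}(Y(u))=[B(Y(u)),0]^T$ has zero second component, so its $\gamma$-norm reduces to a $\gamma(0,t;H,E)$-norm and \eqref{emb} is applied in the type 2 space $E$ (this is exactly the paper's first step). But for \eqref{si2}, \eqref{si4} and \eqref{si5} the semigroup $\mathcal{T}(t-u)$ mixes the components, so $\mathcal{T}(t-u)\mathcal{B}(Y(u))$ has a nontrivial $L^p(-1,0;E)$-part, and your pointwise bound $\|\mathcal{T}(t-u)\mathcal{B}(Y(u))\|_{\gamma(H,\mathcal{E}^p(E))}\leq M_t\|\mathcal{B}(Y(u))\|_{\gamma(H,\mathcal{E}^p(E))}$ followed by squaring, integrating and ``applying \eqref{emb}'' has no justification when $p<2$.

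The paper closes precisely this hole by never using \eqref{emb} at the level of $\mathcal{E}^p(E)$: it first uses Lemma \ref{l:LpGammaFub} to split the $\gamma(0,t;H,\mathcal{E}^p(E))$-norm into the $\pi_1$-component's $\gamma(0,t;H,E)$-norm plus the $\pi_2$-component's $L^p(-1,0;\gamma(0,t;H,E))$-norm, applies \eqref{emb} only with target $E$ (which does have type $2$), and then controls the $\pi_2$-terms via the structural identity \eqref{propT} of the delay semigroup, $(\pi_2\mathcal{T}(t)[x,f])(u)=\pi_1\mathcal{T}(t+u)[x,f]$, together with the ideal property (and, for process \eqref{si4}, Minkowski's integral inequality and Lemma \ref{l:intex}); condition \eqref{si5} then follows from the estimates for \eqref{si4}. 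Your argument as written is correct for $p\geq 2$, where $\mathcal{E}^p(E)$ does have type $2$, but to cover the full range $p\in[1,\infty)$ claimed in the theorem you need the component-wise reduction via Lemma \ref{l:LpGammaFub} and \eqref{propT}, or some substitute for it. The remaining points you raise (decoupling property of $\mathcal{E}^p(E)$ for $p=1$, local Bochner integrability of $Y$, measurability and adaptedness via Remark \ref{r:varcons}) are handled correctly and consistently with the paper.
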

\begin{proof}
We apply Theorem \ref{t:varcons} to obtain the above assertion, for which we need to check condition \eqref{si5} and that the processes given by \eqref{si1}, \eqref{si2} and \eqref{si4} in that theorem are elements of $\gamma(0,t;H,\mathcal{E}^p(E))$ a.s.\ for all $t>0$. Let $t>0$ be fixed.\par
\textbf{Process \eqref{si1} in Theorem \ref{t:varcons}}. By the embedding \eqref{emb} and the Lipschitz-continuity of $\mathcal{B}$ we have:
\begin{align*}
\n s\mapsto \mathcal{B}(Y(s)) \n_{\gamma(0,t;H,\mathcal{E}^p(E))} & = \n s\mapsto B(Y(s)) \n_{\gamma(0,t;H,E)}\\
& \lesssim  \n s\mapsto B(Y(s)) \n_{L^2(0,t;\gamma(H,E))}\\
& \lesssim t^{\inv{2}}\n B(0)\n_{\gamma(H,E)} + K \n Y \n_{L^2(0,t;\mathcal{E}^p(E))},
\end{align*}
where $K$ is the Lipschitz-constant of $B$.\par
\textbf{Process \eqref{si2} in Theorem \ref{t:varcons}}.
By Lemma \ref{l:LpGammaFub} and embedding \eqref{emb} we have:
\begin{align*}
& \n u\mapsto \mathcal{T}(t-u)\mathcal{B}(Y(u)) \n_{\gamma(0,t;H,\mathcal{E}^p(E))} \\
& \qquad \qquad \qquad \qquad  \lesssim_p 
\n u\mapsto \pi_1 \mathcal{T}(t-u)\mathcal{B}(Y(u)) \n_{\gamma(0,t;H,E)} \\
& \qquad \qquad \qquad \qquad \quad + \n u\mapsto \pi_2 \mathcal{T}(t-u)\mathcal{B}(Y(u)) \n_{L^p(-1,0; \gamma(0,t;H,E))}\\
& \qquad \qquad \qquad \qquad \leq \n u\mapsto \pi_1 \mathcal{T}(t-u)\mathcal{B}(Y(u)) \n_{L^2(0,t;\gamma(H,E))} \\ & \qquad \qquad \qquad \qquad \quad + \n u\mapsto \pi_2 \mathcal{T}(t-u)\mathcal{B}(Y(u)) \n_{L^p(-1,0; L^2(0,t;\gamma(H,E))}.
\end{align*}
Set $M_t:=\sup_{u\in[0,t]}\n \mathcal{T}(u)\n_{\mL(\mathcal{E}^p(E))}$. By the ideal property of the $\gamma$-radonifying operators and the Lipschitz-continuity of $\mathcal{B}$ we have:
\begin{align*}
&\n u\mapsto \pi_1 \mathcal{T}(t-u)\mathcal{B}(Y(u)) \n_{L^2(0,t;\gamma(H,E))} \\
&\qquad \qquad \leq M_t \Big[ t^{\inv{2}}\n B(0)\n_{\gamma(H,E)} + K \n Y \n_{L^2(0,t;\mathcal{E}^p(E))}\Big],
\end{align*}
where $K$ is the Lipschitz-constant of $B$, and, by equality \eqref{propT},
\begin{align*}
& \n u\mapsto \pi_2 \mathcal{T}(t-u)\mathcal{B}(Y(u)) \n_{L^p(-1,0; L^2(0,t;\gamma(H,E))}  \\
& \qquad \qquad = \Big(\int_{-1}^{0} \n \pi_1 \mathcal{T}(t-u+s) \mathcal{B}(Y(u))\n^p_{L^2(0,t+s;\gamma(H,E)} ds \Big)^{\inv{p}} \\
& \qquad \qquad \leq M_t \Big[ t^{\inv{2}}\n B(0)\n_{\gamma(H,E)} + K\n Y \n_{L^2(0,t;\mathcal{E}^p(E))}\Big].
\end{align*}
\textbf{Process \eqref{si4} in Theorem \ref{t:varcons}}. Note that by Remark \ref{r:varcons} we may interpret $$\int_{0}^{t-u} \mathcal{T}(s)\mathcal{B}(Y(u)) ds$$ as a $\gamma(H,\mathcal{E}^p(E))$-valued Bochner integral. To prove that the process $$u\mapsto \int_{0}^{t-u} \mathcal{T}(s)\mathcal{B}(Y(u)) ds \in \gamma(0,t;H,\mathcal{E}^p(E))\quad \textrm{a.s.,}$$ observe that by Lemma \ref{l:LpGammaFub} and embedding \eqref{emb} we have:
\begin{align*}
& \Big\Vert u\mapsto \int_{0}^{t-u} \mathcal{T}(s)\mathcal{B}(Y(u))ds \Big\Vert_{\gamma(0,t;H,\mathcal{E}^p(E))} \\&\qquad \lesssim_p \Big\n u\mapsto \pi_1 \int_{0}^{t-u} \mathcal{T}(s)\mathcal{B}(Y(u))ds \Big\n_{L^2(0,t;\gamma(H,E))} \\ & \qquad \quad + \Big\n u\mapsto \pi_2  \int_{0}^{t-u} \mathcal{T}(s)\mathcal{B}(Y(u))ds \Big\n_{L^p(-1,0; L^2(0,t;\gamma(H,E))}.
\end{align*}
By Minkowski's integral inequality, the ideal property of the $\gamma$-radonifying operators and the Lipschitz-continuity of $\mathcal{B}$ we have:
\begin{align*}
&\Big\n u\mapsto \pi_1 \int_{0}^{t-u} \mathcal{T}(s)\mathcal{B}(Y(u))ds \Big\n_{L^2(0,t;\gamma(H,E))} \\
&\qquad \qquad \qquad \leq t M_t \Big[ t^{\inv{2}} \n B(0)\n_{\gamma(H,E)} + K \n Y \n_{L^2(0,t;\mathcal{E}^p(E))}\Big],
\end{align*}
and by equation \eqref{propT} and Lemma \ref{l:intex} we have:
\begin{align*}
& \Big\n u\mapsto \pi_2 \int_{0}^{t-u} \mathcal{T}(s)\mathcal{B}(Y(u))ds \Big\n_{L^p(-1,0; L^2(0,t;\gamma(H,E))}  \\
& \qquad \qquad = \Big( \int_{-1}^{0} \Big\n u\mapsto \pi_1 \int_{0}^{t-u+r} \mathcal{T}(s+r)\mathcal{B}(Y(u))ds \Big\n_{L^2(0,t;\gamma(H,E))}^p dr \Big)^{\inv{p}}\\
& \qquad \qquad \leq t M_t \Big[ t^{\inv{2}} \n B(0)\n_{\gamma(H,E)} + K\n Y \n_{L^2(0,t;\mathcal{E}^p(E))}\Big].
\end{align*}
\textbf{Condition \eqref{si5} in Theorem \ref{t:varcons}.}
From the estimates for process \eqref{si4} above we obtain:
\begin{align*}
& \int_0^t\n u\mapsto \mathcal{T}(s-u)\mathcal{B}(Y(u)) \n_{\gamma(0,s;H,\mathcal{E}^p(E))} ds\\
& \qquad \qquad \qquad \qquad \lesssim_p 
2tM_t\Big[ t^{\inv{2}}\n B(0)\n_{\gamma(H,E)} + K\n Y \n_{L^2(0,t;\mathcal{E}^p(E))}\Big].
\end{align*}
Having checked condition \eqref{si5} and that all processes are in $\gamma(0,t;H,E)$ a.s.\ we may apply Theorem \ref{t:varcons} to obtain the desired result.
\end{proof}
\begin{remark}\label{r:pisYinLp}
Let $p'$ be such that $\inv{p}+\inv{p'}=1$. By testing the stochastic convolution in equation \eqref{Dvarcon} against elements of $E^*\times L^{p'}(-1,0;E^*)$, which is norming for $\mathcal{E}^p(E)$, and applying equality \eqref{propT} one shows that
\begin{align*}
\int_{0}^{t} \pi_2 \mathcal{T}(t-s)\mathcal{B}(Y(s)) dW_H(s) &= u\mapsto \int_{0}^{t+u} \pi_1 \mathcal{T}(t-s+u)\mathcal{B}(Y(s))dW_H(s) \quad \textrm{a.s.}
\end{align*}
It thus follows from the variation of constants formula \eqref{voc} that if $Y$ is a generalized strong solution to \eqref{SDCP} then $\pi_2 Y(t)(u) = \pi_1 Y(t+u)$; in particular it follows that $\pi_1 Y\in L_{loc}^p(0,\infty;E)$ a.s.
\end{remark}

\subsection{Existence and uniqueness of the solution to (SDCP)}
We continue consider \eqref{SDCP} on page \pageref{SDCP}. Recall that $(\F_s)_{s\geq 0}$ is a filtration to which $W_H$ is adapted. For $t > 0$, $q\in [1,\infty)$ and $r \in [1,\infty]$ let $L^r_{\F}(0,t;L^q(\Omega; \mathcal{E}^p(E)))$ be the Banach space of $(\F_s)_{s\geq 0}$ adapted processes in $L^r(0,t;L^q(\Omega;\mathcal{E}^p(E)))$. In particular, $L^{\infty}_{\F}(0,t;L^q(\Omega; \mathcal{E}^p(E)))$ is the Banach space of $(\F_s)_{s\geq 0}$ adapted processes $Y$ such that
\begin{align*}
\n Y \n_{L^{\infty}_{\F}(0,t;L^q(\Omega; \mathcal{E}^p(E)))}& = \sup_{0\leq s \leq t} \big( \E\ \n Y(s)\n_{\mathcal{E}^p(E)}^q\big)^{\inv{q}} < \infty.
\end{align*}

\begin{thm}\label{t:exist}
Let the assumptions of Theorem \ref{t:Dvarcons} hold. In addition, assume that $Y_0:=[x_0,f_0]^T\in L^q(\F_0,\mathcal{E}^p(E))$ for some $q \in [2,\infty)$.
Then for every $t>0$ and every $r\in [2,\infty]$ there exists a unique process $Y\in L^r_{\F}(0,t;L^q(\Omega; \mathcal{E}^p(E)))$ for which \eqref{Dvarcon}
holds. In particular, this process is in $L^{\infty}_{\F}(0,t;L^q(\Omega; \mathcal{E}^p(E)))$.
\end{thm}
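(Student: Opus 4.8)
The plan is to prove existence and uniqueness via the Banach fixed-point theorem applied to the map $L$ defined on $L^r_{\F}(0,t;L^q(\Omega;\mathcal{E}^p(E)))$ by
\[
(LY)(s) := \mathcal{T}(s)Y_0 + \int_0^s \mathcal{T}(s-u)\mathcal{B}(Y(u))\,dW_H(u),
\]
first on a small time interval $[0,t_0]$ where $t_0$ depends only on the Lipschitz constant $K$ of $B$ and the semigroup bound $M_t$, and then patching finitely many such intervals together to cover an arbitrary $[0,t]$. The first step is to check that $L$ maps the space into itself: for this one combines the $\gamma$-radonifying estimates already established in the proof of Theorem \ref{t:Dvarcons} (in particular the bound on process \eqref{si2}) with the two-sided Burkholder--Davis--Gundy inequality \eqref{BDG} (valid since $\mathcal{E}^p(E)$ is UMD when $1<p<\infty$, or \eqref{BDG2} together with the decoupling property and the Fubini argument of Remark \ref{r:stochint_umdmin} when $p=1$) to obtain, for each $s\in[0,t]$,
\[
\big(\E\,\|(LY)(s)\|_{\mathcal{E}^p(E)}^q\big)^{1/q} \lesssim_{p,q} M_t\big(\E\,\|Y_0\|^q\big)^{1/q} + M_t t^{1/2}\|B(0)\|_{\gamma(H,E)} + M_t K\, t^{1/2}\,\|Y\|_{L^\infty_{\F}(0,t;L^q(\Omega;\mathcal{E}^p(E)))},
\]
using the type 2 embedding \eqref{emb} to pass from the $\gamma(0,t;H,\mathcal{E}^p(E))$-norm to the $L^2(0,t;\gamma(H,\mathcal{E}^p(E)))$-norm. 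Since the right-hand side is uniform in $s$, $LY$ lies in $L^\infty_{\F}$, hence a fortiori in $L^r_{\F}$ for every $r\in[2,\infty]$; adaptedness and strong measurability of $LY$ follow from Proposition A.1 in \cite{vanNeervenVeraarWeis_SEEinUMD} as in the proof of Theorem \ref{t:reg}.

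Next I would establish the contraction estimate. By linearity of the stochastic convolution in $\mathcal{B}$,
\[
(LY_1)(s) - (LY_2)(s) = \int_0^s \mathcal{T}(s-u)\big(\mathcal{B}(Y_1(u)) - \mathcal{B}(Y_2(u))\big)\,dW_H(u),
\]
and applying the same BDG and $\gamma$-norm estimates together with the Lipschitz bound $\|\mathcal{B}(Y_1(u))-\mathcal{B}(Y_2(u))\|_{\gamma(H,E)} \le K\|Y_1(u)-Y_2(u)\|_{\mathcal{E}^p(E)}$ yields
\[
\|LY_1 - LY_2\|_{L^\infty_{\F}(0,t_0;L^q(\Omega;\mathcal{E}^p(E)))} \le C_{p,q} M_{t_0} K\, t_0^{1/2}\,\|Y_1 - Y_2\|_{L^\infty_{\F}(0,t_0;L^q(\Omega;\mathcal{E}^p(E)))}.
\]
Choosing $t_0>0$ small enough that $C_{p,q}M_{t_0}K t_0^{1/2} < 1$ makes $L$ a strict contraction on the Banach space $L^\infty_{\F}(0,t_0;L^q(\Omega;\mathcal{E}^p(E)))$, which has a unique fixed point $Y$ there. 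To cover all of $[0,t]$, I would run the argument on $[0,t_0]$, then on $[t_0,2t_0]$ with initial value $Y(t_0)\in L^q(\F_{t_0},\mathcal{E}^p(E))$ (using that $Y\in L^\infty_{\F}$ guarantees $Y(t_0)$ has the required integrability), and so on; since $t_0$ can be taken uniform over bounded time horizons (the constant $M_s$ is nondecreasing in $s$), finitely many steps suffice, and the concatenated process is the unique solution in $L^\infty_{\F}(0,t;L^q(\Omega;\mathcal{E}^p(E)))$.

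Finally I would upgrade uniqueness from $L^\infty_{\F}$ to the full class $L^r_{\F}(0,t;L^q(\Omega;\mathcal{E}^p(E)))$ for arbitrary $r\in[2,\infty]$: if $Y\in L^r_{\F}$ satisfies \eqref{Dvarcon}, then a localization/bootstrap argument shows $Y$ is in fact in $L^\infty_{\F}$ on small intervals (the same right-hand side estimate, applied with the $L^\infty_{\F}$-norm of $Y$ replaced by its $L^r_{\F}$-norm via Hölder in time when $r<\infty$, forces local boundedness in $L^q(\Omega)$ uniformly in $s$), so it coincides with the fixed point constructed above; then one patches as before. The main obstacle is bookkeeping the dependence of constants correctly so that the fixed-point time step $t_0$ does not shrink to zero as one iterates over $[0,t]$ — this is handled by noting $M_s\le M_t$ for $s\le t$ — together with the case $p=1$, where $\mathcal{E}^1(E)$ is not UMD: there one must invoke the one-sided estimate \eqref{BDG2} and the decoupling property of $\mathcal{E}^1(E)$ (established in Remark \ref{r:stochint_umdmin}), which still suffices for the contraction argument since only an upper bound on the stochastic convolution is needed.
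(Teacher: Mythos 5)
Your proposal is correct and follows essentially the same route as the paper: a Banach fixed-point argument for the map $L(Z)(s)=\mathcal{T}(s)Y_0+\int_0^s\mathcal{T}(s-u)\mathcal{B}(Z(u))\,dW_H(u)$, with the mapping and contraction bounds obtained from the $\gamma$-norm estimates in the proof of Theorem \ref{t:Dvarcons}, the type 2 embedding \eqref{emb}, the one-sided inequality \eqref{BDG2} (which is all that is needed, also for $1<p<\infty$), and iteration over small time intervals using $M_s\leq M_t$. The only organizational difference is that you contract in $L^\infty_{\F}(0,t_0;L^q(\Omega;\mathcal{E}^p(E)))$ and recover uniqueness in $L^r_{\F}$ for finite $r$ by the bootstrap showing any $L^r_{\F}$ solution is automatically in $L^\infty_{\F}$, whereas the paper runs the same contraction directly in $L^r_{\F}(0,t;L^q(\Omega;\mathcal{E}^p(E)))$ via H\"older in time (the factor $s^{\inv{2}-\frac1r}$); both versions are sound.
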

\begin{proof}
The final remark in the theorem follows from the existence of a solution in $L^{\infty}_{\F}(0,t;L^q(\Omega; \mathcal{E}^p(E)))$ and the uniqueness of the solution in $L^r_{\F}(0,t;L^q(\Omega; \mathcal{E}^p(E)))$.\par
Fix $r\in (2,\infty]$ and let $t>0$. Define $$L:L^r_{\F}(0,t;L^q(\Omega; \mathcal{E}^p(E)))\rightarrow L^r_{\F}(0,t;L^q(\Omega; \mathcal{E}^p(E)))$$ as follows:
\begin{align*}
L(Z)(s) := \mathcal{T}(s)Y_0 + \int_{0}^{s}\mathcal{T}(s-u)\mathcal{B}(Z(u))dW_H(u),
\end{align*}
where $s\in [0,t]$. Set $M_t:= \sup_{0\leq u\leq t}\n \mathcal{T}(u) \n_{\mathcal{L}(\mathcal{E}^p(E))}$. To prove that $L(Z)$ is indeed in $L^r_{\F}(0,t;L^q(\Omega; \mathcal{E}^p(E)))$, first observe that by inequality \eqref{BDG2} and the proof of Theorem \ref{t:Dvarcons}, we have:
\begin{align*}
&(\E\ \n L(Z)(s) \n_{\mathcal{E}^p(E)}^q)^{\inv{q}}\\
&\lesssim_q (\E\ \n \mathcal{T}(s)Y_0\n_{\mathcal{E}^p(E)}^q)^{\inv{q}} + \n u \mapsto \mathcal{T}(s-u)\mathcal{B}(Z(s)) \n_{L^q(\Omega,\gamma(0,s;H,\mathcal{E}^p(E)))} \\
&\leq M_t \Big[(\E \ \n Y_0\n_{\mathcal{E}^p(E)}^q)^{\inv{q}} + \Big(\E\ \Big[ s^{\inv{2}}\n B(0)\n_{\gamma(H,E)} + K \Big( \int_{0}^{s} \n Z(u) \n_{\mathcal{E}^p(E)}^2 du \Big)^{\inv{2}} \Big]^q\Big)^{\inv{q}}\Big],
\end{align*}
and thus from Minkowski's integral inequality, the H\"older inequality and the fact that $r\geq q \geq 2$ we obtain:
\begin{align*}
&(\E\ \n L(Z)(s) \n_{\mathcal{E}^p(E)}^q)^{\inv{q}}\\
&\leq M_t \Big[(\E \ \n Y_0\n_{\mathcal{E}^p(E)}^q)^{\inv{q}}+t^{\inv{2}}\n B(0)\n_{\gamma(H,E)}+K\Big(\int_0^s \big[\E\ \n Z(u)\n_{\mathcal{E}^p(E)}^q\big]^{\frac{2}{q}} du \Big)^\inv{2}\Big]\\
&\leq M_t \Big[(\E \ \n Y_0\n_{\mathcal{E}^p(E)}^q)^{\inv{q}}+t^{\inv{2}}\n B(0)\n_{\gamma(H,E)}+Ks^{\inv{2}-\frac{1}{r}}\n Z \n_{L^r(0,t;L^q(\Omega;\mathcal{E}^p(E)))}\Big],
\end{align*}
for every $s\in[0,t]$, where $K$ is the Lipschitz constant of $B$. (In the case $r=\infty$ we interpret $\frac{1}{r}=0$.) Taking $r^{\textrm{th}}$ powers in the above and integrating with respect to $s$ gives that $L(Z) \in  L^r_{\F}(0,t;L^q(\Omega; \mathcal{E}^p(E)))$. In the same way as the above estimate, one has for $Z_1,Z_2\in L^r_{\F}(0,t;L^q(\Omega; \mathcal{E}^p(E)))$:
\begin{align*}
\n L(Z_1) - L(Z_2) \n_{L^r_{\F}(0,t;L^q(\Omega; \mathcal{E}^p(E)))} & \lesssim_q K t^{\inv{2}} M_t \n Z_1 - Z_2 \n_{L^r_{\F}(0,t;L^q(\Omega;
\mathcal{E}^p(E)))},
\end{align*}
so this is a strict contraction for $t$ sufficiently small. Hence by the Banach fixed-point theorem there exists a unique $Y\in
L^r_{\F}(0,t;L^q(\Omega; \mathcal{E}^p(E)))$ that satisfies \eqref{voc}. By repeating this argument one obtains a solution for arbitrary $t>0$.
\end{proof}
\subsection{Continuity of the solution to (SDCP)}
\begin{thm}\label{t:regSDCP}
Let the assumptions of Theorem \ref{t:Dvarcons} hold. In addition, assume that $Y_0:=[x_0,f_0]^T\in L^q(\F_0,\mathcal{E}^p(E))$ for some $q\in (2,\infty)$.
Let $t>0$. Then the solution $Y\in L^\infty_{\F}(0,t;L^q(\Omega; \mathcal{E}^p(E)))$ to \eqref{SDCP} as given by Theorem \ref{t:exist} satisfies $Y \in
L^q(\Omega;C([0,t]; \mathcal{E}^p(E)))$.
\end{thm}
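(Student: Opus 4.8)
The plan is to apply the factorization method---that is, Theorem~\ref{t:reg}, or equivalently Corollary~\ref{c:contSCP}---to the stochastic convolution appearing in the variation of constants formula \eqref{Dvarcon}. Write $Y(s)=\mathcal{T}(s)Y_0+\Psi(s)$, where
\[
\Psi(s):=\int_{0}^{s}\mathcal{T}(s-u)\mathcal{B}(Y(u))\,dW_H(u);
\]
this makes sense because, by Theorem~\ref{t:Dvarcons}, the convolution is a well-defined process and $Y$ satisfies \eqref{Dvarcon}. The deterministic part $s\mapsto\mathcal{T}(s)Y_0$ has continuous paths since $(\mathcal{T}(t))_{t\ge0}$ is strongly continuous and $Y_0$ is a fixed $\mathcal{E}^p(E)$-valued random variable, and $\sup_{0\le s\le t}\n\mathcal{T}(s)Y_0\n_{\mathcal{E}^p(E)}\le M_t\n Y_0\n_{\mathcal{E}^p(E)}$, which is in $L^q(\Omega)$ since $Y_0\in L^q(\F_0,\mathcal{E}^p(E))$, with $M_t:=\sup_{0\le u\le t}\n\mathcal{T}(u)\n_{\mL(\mathcal{E}^p(E))}$; hence it already lies in $L^q(\Omega;C([0,t];\mathcal{E}^p(E)))$. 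So it suffices to prove that $\Psi$ does as well.

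For that I would apply Theorem~\ref{t:reg} with $F=\mathcal{E}^p(E)$ (which has the decoupling property since $E$ is \textsc{umd}) and $Z=\mathcal{B}(Y)$, which is $H$-strongly measurable and adapted because $Y$ is strongly measurable and adapted and $\mathcal{B}$ is continuous. Since $q>2$, one may fix $\alpha$ with $\inv{q}<\alpha<\inv{2}$, so the only thing left to verify is the uniform bound \eqref{gbddin}:
\[
\sup_{0\le s\le t}\ \n u\mapsto(s-u)^{-\alpha}\mathcal{T}(s-u)\mathcal{B}(Y(u))\n_{L^q(\Omega;\gamma(0,s;H,\mathcal{E}^p(E)))}<\infty.
\]
This is a weighted version of the estimates already carried out in the proof of Theorem~\ref{t:Dvarcons}. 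I would split the $\gamma(0,s;H,\mathcal{E}^p(E))$-norm, via the $L^p$-Fubini isomorphism (Lemma~\ref{l:LpGammaFub}) and the identification $\gamma(0,s;H,L^p(-1,0;E))\eqsim L^p(-1,0;\gamma(0,s;H,E))$, into a $\pi_1$-part with values in $E$ and a $\pi_2$-part which becomes an $L^p(-1,0;dr)$-average of $\gamma(0,s;H,E)$-norms; this split is essential because $\mathcal{E}^p(E)$ need not have type~$2$ while $E$ does. For the $\pi_2$-part I would use \eqref{propT} together with $\pi_2\mathcal{B}\equiv0$ to write $(\pi_2\mathcal{T}(s-u)\mathcal{B}(Y(u)))(r)=1_{\{u<s+r\}}\,\pi_1\mathcal{T}(s-u+r)\mathcal{B}(Y(u))$ for $r\in[-1,0]$. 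In both parts the type~$2$ embedding \eqref{emb}, the ideal property of $\gamma$-radonifying operators and $\n\mathcal{T}(\cdot)\n\le M_t$ reduce the pathwise $\gamma$-norm, up to a constant depending only on $p$ and $t$, to $\big(\int_{0}^{s}(s-u)^{-2\alpha}\n B(Y(u))\n_{\gamma(H,E)}^{2}\,du\big)^{1/2}$, the $dr$-integral over the bounded interval $(-1,0)$ contributing only a harmless constant.

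It then remains to take the $L^q(\Omega)$-norm: since $q\ge2$, Minkowski's integral inequality gives
\[
\Big(\E\,\Big[\int_{0}^{s}(s-u)^{-2\alpha}\n B(Y(u))\n_{\gamma(H,E)}^{2}\,du\Big]^{q/2}\Big)^{1/q}\le\Big(\int_{0}^{s}(s-u)^{-2\alpha}\big(\E\,\n B(Y(u))\n_{\gamma(H,E)}^{q}\big)^{2/q}\,du\Big)^{1/2},
\]
and then Lipschitz continuity of $B$ together with $\sup_{0\le u\le t}(\E\,\n Y(u)\n_{\mathcal{E}^p(E)}^{q})^{1/q}=\n Y\n_{L^{\infty}_{\F}(0,t;L^{q}(\Omega;\mathcal{E}^p(E)))}<\infty$ (finite by Theorem~\ref{t:exist}) bounds this by a constant multiple of $\big(\int_{0}^{s}(s-u)^{-2\alpha}\,du\big)^{1/2}=(1-2\alpha)^{-1/2}s^{1/2-\alpha}$, which is bounded uniformly on $[0,t]$ because $\alpha<\inv{2}$. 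This establishes \eqref{gbddin}. Theorem~\ref{t:reg} then yields that $\Psi$ has a version with continuous paths and $\E\,\sup_{0\le s\le t}\n\Psi(s)\n_{\mathcal{E}^p(E)}^{q}<\infty$; adding back the continuous term $s\mapsto\mathcal{T}(s)Y_0$ gives $Y\in L^{q}(\Omega;C([0,t];\mathcal{E}^p(E)))$.

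The main obstacle is the verification of \eqref{gbddin}: one has to thread the singular weight $(s-u)^{-\alpha}$ through the chain of $\gamma$-norm estimates used in the proof of Theorem~\ref{t:Dvarcons}, checking that it remains square-integrable in $u$ (which requires $\alpha<\inv{2}$) while staying mild enough for the factorization operator $R_\alpha$ of Theorem~\ref{t:reg} to be bounded (which requires $\alpha>\inv{q}$, hence the hypothesis $q>2$), and one must correctly interleave the $L^q(\Omega)$-norm with the $\gamma$-norm and with the $L^p(-1,0)$-norm coming from the delay coordinate. Everything else is a direct appeal to Theorem~\ref{t:reg}.
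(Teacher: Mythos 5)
Your proposal is correct and follows essentially the same route as the paper: it verifies the weighted bound \eqref{gbddin} for $Z=\mathcal{B}(Y)$ by splitting the $\gamma$-norm via Lemma \ref{l:LpGammaFub}, using \eqref{propT}, the type~2 embedding \eqref{emb}, the ideal property, Minkowski's inequality (since $q>2$) and the $L^\infty_{\F}(0,t;L^q)$ bound from Theorem \ref{t:exist}, and then invokes the factorization result. The only cosmetic difference is that you apply Theorem \ref{t:reg} directly and treat the term $s\mapsto\mathcal{T}(s)Y_0$ separately, while the paper cites Corollary \ref{c:contSCP}, which packages both steps together.
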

\begin{proof}
The statement follows from Corollary \ref{c:contSCP} if it holds that for some $\alpha\in (\inv{q},\inv{2})$ we have:
\begin{align}\label{supcond}
\sup_{0\leq s\leq t} \n u\mapsto (s-u)^{-\alpha} \mathcal{T}(s-u)\mathcal{B}(Y(u)) \n_{L^q(\Omega,\gamma(0,s;\mathcal{E}^p(E)))} < \infty.
\end{align}
Fix $\alpha\in (\inv{q},\inv{2})$ and $s\in [0,t]$. By Lemma \ref{l:LpGammaFub} and embedding \eqref{emb} we have:
\begin{equation}
\begin{aligned}\label{splitup}
&\n u\mapsto (s-u)^{-\alpha} \mathcal{T}(s-u)\mathcal{B}(Y(u)) \n_{\gamma(0,s;\mathcal{E}^p(E))} \\
&\qquad \qquad \lesssim_{p} \n u\mapsto \pi_1 (s-u)^{-\alpha} \mathcal{T}(s-u)\mathcal{B}(Y(u)) \n_{L^2(0,s;\gamma(H,E))} \\ 
&\qquad \qquad \quad + \n u\mapsto  (s-u)^{-\alpha} \pi_2\mathcal{T}(s-u)\mathcal{B}(Y(u)) \n_{L^p(-1,0; L^2(0,s;\gamma(H,E))},
\end{aligned}
\end{equation}
where $M_t:=\sup_{u\in[0,t]}\n \mathcal{T}(u)\n_{\mL(\mathcal{E}^p(E))}$. Concerning the final term in \eqref{splitup}; by \eqref{propT} and by the ideal property of the $\gamma$-radonifying operators we have:
\begin{align*}
& \n u\mapsto (s-u)^{-\alpha} \pi_2\mathcal{T}(s-u)\mathcal{B}(Y(u)) \n_{L^p(-1,0; L^2(0,s;\gamma(H,E))} \\
&\qquad= \Big[ \int_{-1}^{0} \Big(\int_{0}^{s+r}   (s-u)^{-2\alpha}\n \pi_1 \mathcal{T}(s-u+r)\mathcal{B}(Y(u)) \n_{\gamma(H,E)}^2 du \Big)^{\frac{p}{2}} dr \Big]^{\inv{p}}\\
& \qquad \leq M_t \Big[ \int_{-1}^{0} \Big(\int_{0}^{s}   (s-u)^{-2\alpha}\n B(Y(u)) \n_{\gamma(H,E)}^2 du \Big)^{\frac{p}{2}} dr \Big]^{\inv{p}}\\
& \qquad = M_t \Big( \int_{0}^{s}  (s-u)^{-2\alpha}\n B(Y(u)) \n_{\gamma(H,E)}^2 du \Big)^{\frac{1}{2}}.
\end{align*}
As $q>2$, and using in addition the Lipschitz-continuity of $B$, it follows that:
\begin{align*}
& \n u\mapsto (s-u)^{-\alpha} \pi_2\mathcal{T}(s-u)\mathcal{B}(Y(u)) \n_{L^q(\Omega;L^p(-1,0; L^2(0,s;\gamma(H,E)))} \\
&\qquad \leq M_t \Big( \int_{0}^{s}  (s-u)^{-2\alpha}\big[\E\n B(Y(u)) \n_{\gamma(H,E)}^q\big]^{\frac{2}{q}} du \Big)^{\frac{1}{2}}\\
&\qquad \leq (1-2\alpha)^{-\inv{2}} M_t s^{\inv{2}-\alpha} \big[\n B(0)\n_{\gamma(H,E)} + K\sup_{u\in[0,s]} (\E \ \n Y(u) \n_{\mathcal{E}^p(E)}^{q})^{\inv{q}}\big] < \infty,
\end{align*}
where $K$ is the Lipschitz constant of $B$. The estimate for the first term on the right-hand side of \eqref{splitup} is similar, but slightly simpler; one obtains:
\begin{align*}
& \n u\mapsto (s-u)^{-\alpha} \pi_1\mathcal{T}(s-u)\mathcal{B}(Y(u)) \n_{L^q(\Omega; L^2(0,s;\gamma(H,E))} \\
&\qquad \leq (1-2\alpha)^{-\inv{2}} M_t s^{\inv{2}-\alpha} \big[\n B(0)\n_{\gamma(H,E)} + K\sup_{u\in[0,s]} (\E \ \n Y(u) \n_{\mathcal{E}^p(E)}^{q})^{\inv{q}}\big] < \infty.
\end{align*}
From the above estimates and the fact that $s^{\inv{2}-\alpha}\leq t^{\inv2-\alpha}$ because $\alpha<\tfrac{1}{2}$, we conclude that \eqref{supcond} holds.
\end{proof}

\subsection{Equivalence of solutions to (SDE) and (SDCP)}
Consider the problem \eqref{SDE} as given in the introduction with a fixed $p\in [1,\infty)$. 
\begin{defn}\label{d:strongsol_SDE}
A process $X:[-1,\infty)\times\Omega \rightarrow E$ is called a \emph{strong solution} to \eqref{SDE} if it is measurable and adapted to $(\F_t)_{t\geq 0}$  and for all $t\geq 0$ one has:
\begin{enumerate}
\item\label{strsol_1} $ \int_{0}^{t}  |X (s)|^{2\maxsym p} ds< \infty$ a.s.;
\item $X|_{[-1,0)}=f_0$,
\item $\int_{0}^{t} X(s)ds \in D(A)$ for all $t>0$ a.s.;
\end{enumerate}
and
\begin{align} \label{SDE_int}
X(t)-x_0&= A\int_{0}^{t}X(s)ds+ C \int_{0}^{t} X_s ds + \int_0^t B(X(s),X_s)dW_H(s) \quad \textrm{a.s.}
\end{align}
\end{defn}
\begin{remark}\label{r:intinWp}
Note that by condition \eqref{strsol_1} and Lemma \ref{l:intex} one has $\int_{0}^{t} X_s ds\in W^{1,p}(-1,0;E)$ a.s. Moreover, for any $t>0$; by Minkowski's integral inequality 
\begin{align*}
\Big(\int_{0}^{t} \n X_s \n_{L^p(E)}^2 ds\Big)^{\inv{2}} & = \Big(\int_{0}^{t} \Big[ \int_{s-1}^{s} \n X(u) \n_{E}^p du \Big]^{\frac{2}{p}} ds\Big)^{\inv{2}}\\
&\leq \n f_0 \n_{L^p} + \Big(\int_{0}^{t} \Big[ \int_{0}^{t} \n X(u)\n_E^{p\maxsym 2} du\Big]^{\frac{p\minsym 2}{p\maxsym 2}}ds \Big)^{\inv{p\minsym 2}}\\
& = \n f_0 \n_{L^p} + t^{\inv{p\minsym 2}}\Big[ \int_{0}^{t} \n X(u)\n_E^{p\maxsym 2} du\Big]^{\frac{1}{p\maxsym 2}}<\infty \quad \textrm{a.s.}
\end{align*}
Hence by condition \eqref{strsol_1} the stochastic integral on right hand side of \eqref{SDE_int} is well defined.
\end{remark}
\begin{thm}\label{t:repr}
\begin{enumerate}
\item Let $X$ be a strong solution to \eqref{SDE}, then the process $Y$ defined by $Y(t):=[X(t),X_t]^T$ is a generalized strong solution to \eqref{SDCP}. \label{repr1}
\item On the other hand, if $Y$ is a generalized strong solution to \eqref{SDCP} then the process defined by $X|_{[-1,0)}=f_0$, $X(t):=\pi_1(Y(t))$ for $t\geq 0$ is a strong solution to \eqref{SDE}. \label{repr2}
\end{enumerate}
\end{thm}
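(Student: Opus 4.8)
The plan is to prove the two implications by unravelling the block-matrix structure of $\mathcal{A}$ and $\mathcal{B}$ and matching the three defining conditions of a generalized strong solution (Definition \ref{d:strongsolSCP}) with the three conditions of a strong solution to \eqref{SDE} (Definition \ref{d:strongsol_SDE}), component by component via the projections $\pi_1,\pi_2$. The crucial bridge in both directions is the identity $\pi_2 Y(t)(u)=\pi_1 Y(t+u)$, i.e.\ that the second component is a ``history segment'' of the first component; in direction \eqref{repr1} this holds because $Y(t)=[X(t),X_t]^T$ by construction, and in direction \eqref{repr2} it is exactly the content of Remark \ref{r:pisYinLp}, which also gives $\pi_1 Y\in L^p_{loc}(0,\infty;E)$ a.s.

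For part \eqref{repr1}: given a strong solution $X$, set $Y(t):=[X(t),X_t]^T$. First I would check integrability: $\|Y(s)\|_{\mathcal{E}^p(E)}^2\eqsim \|X(s)\|_E^2+\|X_s\|_{L^p(E)}^2$, and local square-integrability of the right-hand side follows from condition \eqref{strsol_1} of Definition \ref{d:strongsol_SDE} together with the Minkowski-inequality estimate already carried out in Remark \ref{r:intinWp}. Next, $\int_0^t Y(s)\,ds = [\int_0^t X(s)\,ds,\ \int_0^t X_s\,ds]^T$; by Lemma \ref{l:intex} the second component lies in $W^{1,p}(-1,0;E)$, with derivative $X_t-X_0=X_t-f_0$ and value at $0$ equal to $\int_0^t X(s)\,ds$, while by condition \eqref{strsol_1} of Definition \ref{d:strongsol_SDE} the first component lies in $D(A)$; hence $\int_0^t Y(s)\,ds\in D(\mathcal{A})$ by the description \eqref{opA}. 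Applying $\mathcal{A}$ and using \eqref{SDE_int} together with $Cf=\int_{-1}^0 f\,d\eta$ gives precisely
\begin{align*}
\mathcal{A}\int_0^t Y(s)\,ds &= \vectwo{A\int_0^t X(s)\,ds + C\int_0^t X_s\,ds}{X_t-f_0} = \vectwo{X(t)-x_0-\int_0^t B(X(s),X_s)\,dW_H(s)}{X_t-f_0},
\end{align*}
and since $\mathcal{B}(Y(s))=[B(Y(s)),0]^T$ is stochastically integrable (its first component is $s\mapsto B(X(s),X_s)$, whose integrability was checked in Remark \ref{r:intinWp}, and its second component is zero), adding $\int_0^t\mathcal{B}(Y(s))\,dW_H(s)$ recovers $Y(t)-Y(0)$. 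That is the generalized strong solution identity.

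For part \eqref{repr2}: given a generalized strong solution $Y$, put $X|_{[-1,0)}=f_0$ and $X(t)=\pi_1 Y(t)$ for $t\ge 0$. Measurability and adaptedness of $X$ are inherited from $Y$; condition \eqref{strsol_1} of Definition \ref{d:strongsol_SDE} follows from the square-integrability of $Y$ together with $\pi_1 Y\in L^p_{loc}$ (Remark \ref{r:pisYinLp}) and the Minkowski estimate. Now apply $\pi_1$ and $\pi_2$ separately to the identity $Y(t)-Y(0)=\mathcal{A}\int_0^t Y(s)\,ds + \int_0^t\mathcal{B}(Y(s))\,dW_H(s)$. The $\pi_2$-component, combined with Remark \ref{r:pisYinLp} and Lemma \ref{l:intex}, confirms that $\pi_2 Y(t)(u)=\pi_1 Y(t+u)=X(t+u)=X_t(u)$, so that $Y(t)=[X(t),X_t]^T$ and in particular $X\in L^{2\vee p}_{loc}$. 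Reading off the $\pi_1$-component: since $\int_0^t Y(s)\,ds\in D(\mathcal{A})$ we get $\int_0^t X(s)\,ds\in D(A)$ and, using \eqref{opA}, $\pi_1\mathcal{A}\int_0^t Y(s)\,ds = A\int_0^t X(s)\,ds + C\,\pi_2\int_0^t Y(s)\,ds = A\int_0^t X(s)\,ds + C\int_0^t X_s\,ds$; also $\pi_1\int_0^t\mathcal{B}(Y(s))\,dW_H(s)=\int_0^t B(X(s),X_s)\,dW_H(s)$ by definition of $\mathcal{B}$. This yields \eqref{SDE_int}.

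The main obstacle is the $\pi_2$-bookkeeping: one must be careful that the second-component identity $\pi_2 Y(t)(u)=\pi_1 Y(t+u)$ holds as elements of $L^p(-1,0;E)$ (not just pointwise), that $\pi_2\int_0^t Y(s)\,ds$ genuinely equals $\int_0^t X_s\,ds$ (an interchange justified by Bochner-integral linearity of $\pi_2$ plus Lemma \ref{l:intex}), and that $C$ acting on this $W^{1,p}$-element is well-defined and equals $C\int_0^t X_s\,ds$ in \eqref{SDE_int}; this is where the Riemann--Stieltjes form of $C$ and the Sobolev embedding $W^{1,p}\hookrightarrow C$ enter. Everything else is a routine matching of definitions once the block structure \eqref{opA} and the history-segment identity are in place.
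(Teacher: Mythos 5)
Your proposal is correct and follows essentially the same route as the paper: Lemma \ref{l:intex} together with Remark \ref{r:intinWp} and the block structure \eqref{opA} for part \eqref{repr1}, and the history-segment identity of Remark \ref{r:pisYinLp} applied componentwise for part \eqref{repr2}. The only slip is a reference: membership of $\int_0^t X(s)\,ds$ in $D(A)$ is a separate condition of Definition \ref{d:strongsol_SDE}, not condition \eqref{strsol_1}, but this does not affect the argument.
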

\begin{proof}
Part \eqref{repr1}. In the proof of Theorem \ref{t:Dvarcons} we saw that $s\mapsto \mathcal{B}(Y(s))$ is stochastically integrable if $Y\in L^2(0,t; \mathcal{E}^p(E))$ a.s., which follows from Definition \ref{d:strongsol_SDE}, by Remark \ref{r:intinWp}. From Lemma \ref{l:intex} above it follows that $Y$ is integrable a.s.:
\begin{align*}
\int_0^t Y(s) ds &= \vectwo{\int_{0}^{t} X(s) ds }{\int_{0}^{t}X_s ds}\quad \textrm{a.s.}
\end{align*}
and that $\int_{0}^{t}X_s ds \in W^{1,p}(-1,0;E)$ a.s.\ and $\int_{0}^{t}X_s ds (0)= \int_{0}^{t}X(s) ds \in D(A)$. Hence $\int_0^t Y(s) ds \in D(\mathcal{A})$ a.s.\ and again by Lemma \ref{l:intex} and by assumption we have, a.s.:
\begin{align*}
\mathcal{A} \int_0^t Y(s) ds &= \vectwo{A\int_{0}^t X(s)ds + C\int_{0}^{t} X_s ds}{X_t - f_0} \\&= \vectwo{X(t)-x_0-\int_{0}^{t}B(X(s))dW_H(s)}{X_t-f_0}.
\end{align*}
Combining this equality with the following:
\begin{align*}
\int_{0}^{t} \mathcal{B}(Y(s)) dW_H(s) &= \int_{0}^{t} \vectwo{B(Y(s),Y_s)}{0} dW_H(s) = \vectwo{\int_{0}^{t}B(X(s),X_s)dW_H(s)}{0}
\end{align*}
we see $Y$ satisfies Definition \ref{d:strongsolSCP}.\par
Part \eqref{repr2}. Let $Y$ be a generalized strong solution to \eqref{SDCP} and define $X|_{[-1,0)}=f_0$, $X(t):=\pi_1(Y(t))$ for $t\geq 0$. Recall from Remark \ref{r:pisYinLp} that $\pi_2 Y(t)= u\mapsto \pi_1 Y(s+u)=X_s$. Thus from the definitions of a generalized strong solution and from the generator $\mathcal{A}$ we obtain
$$ X(s) - x_0 = A\int_{0}^t X(s)ds + C \int_{0}^{t} X_sds + \int_{0}^{t} B(X(s),X_s) dW_H(s)\quad a.s.$$
\end{proof}\par
\begin{corol}\label{r:voc2}
$X$ is a strong solution to \eqref{SDE} if and only if $X$ satisfies
\begin{align*}
X(t) = \pi_1\mathcal{T}(t)\vectwo{x_0}{f_0} +\int_{0}^{t}\pi_1 \mathcal{T}(t-s) B(X(s))dW_H(s) \quad\textrm{a.s.}
\end{align*}
\end{corol}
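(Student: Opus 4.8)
The plan is to chain together the equivalence results already established in the paper. First, I would invoke Theorem \ref{t:repr}: a process $X$ is a strong solution to \eqref{SDE} if and only if the lifted process $Y(t):=[X(t),X_t]^T$ (with $X|_{[-1,0)}=f_0$) is a generalized strong solution to \eqref{SDCP}. This reduces the corollary to a statement purely about $Y$ and \eqref{SDCP}.

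Next, I would apply Theorem \ref{t:Dvarcons}, which says that $Y$ is a generalized strong solution to \eqref{SDCP} precisely when $Y$ satisfies the variation of constants formula \eqref{Dvarcon}, namely
$$Y(t) = \mathcal{T}(t)\vectwo{x_0}{f_0} + \int_{0}^{t} \mathcal{T}(t-s)\mathcal{B}(Y(s))dW_H(s).$$
To use this I should check that the hypotheses of Theorem \ref{t:Dvarcons} are met: the process $Y$ built from a strong solution satisfies $\int_0^t \|Y(s)\|_{\mathcal{E}^p(E)}^2\,ds<\infty$ a.s.\ (this is exactly Remark \ref{r:intinWp}, as used in the proof of Theorem \ref{t:repr}), and conversely if $Y$ satisfies \eqref{Dvarcon} the same integrability follows from Theorem \ref{t:exist} / the continuity in Theorem \ref{t:regSDCP}, or more elementarily from the estimates in the proof of Theorem \ref{t:Dvarcons}.

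The final step is to apply $\pi_1$ to both sides of \eqref{Dvarcon}. Since $\pi_1\mathcal{T}(t)[x_0,f_0]^T$ is the first coordinate of the leading term, and since $\pi_1$ may be pulled inside the stochastic integral (it is a bounded operator, so this is immediate from the definition of the stochastic integral, or from Lemma \ref{l:LpGammaFub}), one gets
$$X(t) = \pi_1 Y(t) = \pi_1\mathcal{T}(t)\vectwo{x_0}{f_0} + \int_{0}^{t}\pi_1\mathcal{T}(t-s)\mathcal{B}(Y(s))dW_H(s).$$
Finally, since $\mathcal{B}(Y(s)) = [B(X(s),X_s),0]^T$ and $\pi_1\mathcal{T}(t-s)$ applied to such a vector only sees the $E$-component, the integrand equals $\pi_1\mathcal{T}(t-s)B(X(s),X_s)$ (writing $B(X(s))$ as shorthand for $B(X(s),X_s)$, matching the paper's convention), which is exactly the claimed formula. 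For the converse direction one reverses this: given $X$ satisfying the stated formula, define $Y(t)=[X(t),X_t]^T$ using the relation $\pi_2 Y(t) = X_t$ from Remark \ref{r:pisYinLp}, verify that $\pi_2$ applied to \eqref{Dvarcon} is automatically satisfied (this is precisely the content of Remark \ref{r:pisYinLp}, via \eqref{propT}), so that $Y$ satisfies the full vector equation \eqref{Dvarcon}, hence is a generalized strong solution by Theorem \ref{t:Dvarcons}, hence $X$ is a strong solution by Theorem \ref{t:repr}.

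The only mildly delicate point is the bookkeeping in the converse direction: one must be sure that reconstructing $Y$ from $X$ via the second-coordinate-equals-history rule indeed produces a process satisfying \emph{both} coordinates of \eqref{Dvarcon}, not just the first. This is handled cleanly by Remark \ref{r:pisYinLp}, which shows the second coordinate of the stochastic convolution in \eqref{Dvarcon} is forced to be the history segment of the first coordinate, so no genuine obstacle arises; everything reduces to already-proven statements and the triviality that bounded operators commute with stochastic integrals.
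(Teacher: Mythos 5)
Your proposal is correct and follows exactly the route the paper intends: the corollary is an immediate consequence of chaining Theorem \ref{t:repr} with Theorem \ref{t:Dvarcons} and then projecting with $\pi_1$, using Remark \ref{r:pisYinLp} (via \eqref{propT}) to see that the second coordinate is determined by the history of the first, so nothing is lost in passing to the scalar formula. Your handling of the converse (reconstructing $Y(t)=[X(t),X_t]^T$ and checking both coordinates of \eqref{Dvarcon}) is precisely the implicit argument, and the integrability checks you cite are the same ones the paper relies on.
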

From Theorem \ref{t:exist} and Theorem \ref{t:repr} we obtain:
\begin{corol}\label{c:SDEexist}
Consider \eqref{SDE}. Assume $x_0\in L^q(\F_0;E)$ and $f_0\in L^q(\F_0;L^p)$ for some $p\in [1,\infty)$, $q\in [2,\infty)$. Then \eqref{SDE} has a unique
strong solution in $L^r(0,t;L^q(\Omega;E))$ for every $r\in [2,\infty]$ and every $t>0$.
\end{corol}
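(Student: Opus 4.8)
The plan is to obtain the corollary by gluing together three results already proved for the abstract problem: existence and uniqueness for \eqref{SDCP} (Theorem \ref{t:exist}), the equivalence ``mild solution $\Leftrightarrow$ generalized strong solution'' for \eqref{SDCP} (Theorem \ref{t:Dvarcons}), and the bijection between strong solutions of \eqref{SDE} and generalized strong solutions of \eqref{SDCP} (Theorem \ref{t:repr}). The guiding principle is that $X\mapsto Y:=[X,X_\cdot]^T$ and $Y\mapsto\pi_1 Y$ (extended by $f_0$ on $[-1,0)$) are mutually inverse and, modulo a Fubini/Minkowski bookkeeping, preserve the relevant integrability; existence of a solution to \eqref{SDE} is then read off from the solution Theorem \ref{t:exist} produces for \eqref{SDCP}, and uniqueness from the abstract uniqueness.

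\emph{Existence.} Since $x_0\in L^q(\F_0;E)$ and $f_0\in L^q(\F_0;L^p)$ amount to $Y_0=[x_0,f_0]^T\in L^q(\F_0,\mathcal{E}^p(E))$ with $q\in[2,\infty)$, Theorem \ref{t:exist} gives, for each $t>0$, a process $Y\in L^\infty_\F(0,t;L^q(\Omega;\mathcal{E}^p(E)))$ satisfying \eqref{Dvarcon}. Being in $L^\infty_\F$, $Y$ satisfies $\int_0^t\|Y(s)\|_{\mathcal{E}^p(E)}^2\,ds<\infty$ a.s., so Theorem \ref{t:Dvarcons} identifies $Y$ as a generalized strong solution of \eqref{SDCP}, and Theorem \ref{t:repr}\eqref{repr2} turns it into a strong solution $X$ of \eqref{SDE} (with $X|_{[-1,0)}=f_0$, $X(t)=\pi_1 Y(t)$ for $t\geq 0$). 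The inequality $\|\pi_1 v\|_E\leq\|v\|_{\mathcal{E}^p(E)}$ yields $X\in L^\infty(0,t;L^q(\Omega;E))\subseteq L^r(0,t;L^q(\Omega;E))$ for every $r\in[2,\infty]$ and every $t>0$; this settles existence (in fact in the smaller class $L^\infty$).

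\emph{Uniqueness.} Let $X$ be any strong solution of \eqref{SDE} with $X\in L^r(0,t;L^q(\Omega;E))$, and set $Y(s):=[X(s),X_s]^T$, which is adapted and satisfies $Y(0)=Y_0$. By Definition \ref{d:strongsol_SDE}\eqref{strsol_1} and the Minkowski-type estimate of Remark \ref{r:intinWp} one has $\int_0^t\|Y(s)\|_{\mathcal{E}^p(E)}^2\,ds<\infty$ a.s.; hence Theorem \ref{t:repr}\eqref{repr1} shows $Y$ is a generalized strong solution of \eqref{SDCP} and Theorem \ref{t:Dvarcons} then shows $Y$ satisfies \eqref{Dvarcon}. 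Provided $Y\in L^{r'}_\F(0,t;L^q(\Omega;\mathcal{E}^p(E)))$ for some $r'\in[2,\infty]$, the uniqueness part of Theorem \ref{t:exist} (applied with exponent $r'$) forces $Y$ to coincide with the process built above, so $X=\pi_1 Y$ is uniquely determined on $[0,t]$ and equals $f_0$ on $[-1,0)$ by definition; as $t>0$ is arbitrary, uniqueness follows.

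\emph{The main difficulty.} The one step that requires genuine work is verifying $Y=[X,X_\cdot]^T\in L^{r'}_\F(0,t;L^q(\Omega;\mathcal{E}^p(E)))$, i.e.\ transferring the $E$-valued integrability of $X$ into $L^p(-1,0;E)$-valued integrability of $s\mapsto X_s$. Since $\pi_1 Y=X$ already lies in $L^r(0,t;L^q(\Omega;E))$, the bound $\|X_s\|_{L^p(-1,0;E)}\leq\|f_0\|_{L^p}+\bigl(\int_0^t\|X(v)\|_E^p\,dv\bigr)^{1/p}$ (valid uniformly in $s\in[0,t]$), together with Minkowski's integral inequality exactly as in Remark \ref{r:intinWp}, reduces the matter to controlling $\int_0^t\|X(v)\|_E^{p\maxsym 2}\,dv$ in $L^q(\Omega)$. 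This is immediate when $p\maxsym 2\leq q$ — in particular when $p\leq 2$, which covers the $L^1$-setting emphasized in the introduction — and I expect the case $p\maxsym 2>\min\{q,r\}$ to be the real obstacle; it is dealt with by first upgrading any strong solution $X\in L^r(0,t;L^q(\Omega;E))$ to $X\in L^\infty(0,t;L^q(\Omega;E))$ via a short bootstrap off the variation-of-constants characterization in Corollary \ref{r:voc2} and inequality \eqref{BDG2}, estimating the stochastic convolution precisely as in the proofs of Theorems \ref{t:Dvarcons} and \ref{t:exist}. Once $X\in L^\infty(0,t;L^q(\Omega;E))$, the displayed bound and Minkowski give $X_\cdot\in L^\infty(0,t;L^q(\Omega;L^p(-1,0;E)))$, so $r'=\infty$ works and the uniqueness argument closes.
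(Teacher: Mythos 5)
Your route is exactly the paper's: the paper records no argument for this corollary beyond the citation of Theorems \ref{t:exist} and \ref{t:repr} (with Theorem \ref{t:Dvarcons} supplying the passage between the mild equation \eqref{Dvarcon} and generalized strong solutions), and both your existence argument and the skeleton of your uniqueness argument reproduce precisely that chain; your existence part is complete and correct, and in fact more detailed than what the paper writes down.

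The one point where you go beyond the paper is the ``main difficulty'' paragraph, and there the patch you sketch is not actually secured. You are right that invoking the uniqueness clause of Theorem \ref{t:exist} requires $Y=[X,X_\cdot]^T\in L^{r'}_\F(0,t;L^q(\Omega;\mathcal{E}^p(E)))$ for some $r'$, and right that the Minkowski estimate of Remark \ref{r:intinWp} settles this when $p\vee 2\le \min\{q,r\}$ (in particular for $p\le 2$, or for $r=\infty$). But the proposed bootstrap for the remaining regime is circular as described: to bound the stochastic convolution in Corollary \ref{r:voc2} in $L^q(\Omega;E)$ uniformly in $s$ one uses, via \eqref{BDG2}, \eqref{emb} and the Lipschitz continuity of $B$, a bound on $\bigl(\E\bigl(\int_0^s\n Y(u)\n_{\mathcal{E}^p(E)}^2\,du\bigr)^{q/2}\bigr)^{1/q}$, and the component $\n X_u\n_{L^p(-1,0;E)}$ of $\n Y(u)\n_{\mathcal{E}^p(E)}$ is exactly the quantity whose $L^q(\Omega)$-moment the bootstrap is supposed to deliver; knowing only $X\in L^r(0,t;L^q(\Omega;E))$ with $r<p$ does not start this iteration. (Also, your phrase ``immediate when $p\vee 2\le q$'' overlooks that one additionally needs $s\mapsto(\E\n X(s)\n^q)^{1/q}$ in $L^{p\vee 2}(0,t)$, which is the same issue when $r<p$.) To be fair, the paper is silent on this regime as well, so your proposal matches the paper everywhere the paper says anything; if you want to close the case $p>\min\{q,r\}$ honestly, a more promising route is a localization/Gronwall argument for the mild equation using only the pathwise integrability $\int_0^t\n X(s)\n^{2\vee p}\,ds<\infty$ from Definition \ref{d:strongsol_SDE}, rather than forcing the solution into the $L^{r'}_\F(L^q)$ framework of Theorem \ref{t:exist}.
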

Combining Theorem \ref{t:regSDCP} and Theorem \ref{t:repr} we obtain:
\begin{corol}\label{c:SDEcont}
Consider \eqref{SDE}. Assume $x_0\in L^q(\F_0;E)$ and $f_0\in L^q(\F_0;L^p)$ for some $p\in [1,\infty)$, $q\in (2,\infty)$. The strong solution $X\in
L^\infty(0,t;L^q(\Omega;E))$ to \eqref{SDE} given by Corollary \ref{c:SDEexist} satisfies $X\in L^q(\Omega;C([0,t];E))$. 
\end{corol}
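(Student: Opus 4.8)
The plan is to obtain the corollary by combining the equivalence Theorem~\ref{t:repr} with the regularity result Theorem~\ref{t:regSDCP} for \eqref{SDCP}, using only that the projection $\pi_1\in\mL(\mathcal{E}^p(E),E)$ is a contraction, i.e.\ $\n\pi_1\n\leq1$.

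First I would run the argument on the level of \eqref{SDCP}. Put $Y_0:=[x_0,f_0]^T$; since $x_0\in L^q(\F_0;E)$, $f_0\in L^q(\F_0;L^p)$ and $q\in(2,\infty)$ we have $Y_0\in L^q(\F_0,\mathcal{E}^p(E))$, so Theorem~\ref{t:exist} supplies the (unique) process $Y\in L^\infty_{\F}(0,t;L^q(\Omega;\mathcal{E}^p(E)))$ satisfying the variation of constants formula \eqref{Dvarcon}. Theorem~\ref{t:regSDCP} then improves this to $Y\in L^q(\Omega;C([0,t];\mathcal{E}^p(E)))$; in particular $\int_0^t\n Y(s)\n_{\mathcal{E}^p(E)}^2\,ds<\infty$ a.s., so by Theorem~\ref{t:Dvarcons} the process $Y$ is in fact a generalized strong solution to \eqref{SDCP}. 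Theorem~\ref{t:repr}\eqref{repr2} now tells us that the process $X$ given by $X|_{[-1,0)}=f_0$ and $X(s):=\pi_1(Y(s))$ for $s\geq0$ is a strong solution to \eqref{SDE}.

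Next I would transfer the path regularity through $\pi_1$. For every $\omega$ the map $s\mapsto Y(s,\omega)$ is continuous from $[0,t]$ into $\mathcal{E}^p(E)$, hence $s\mapsto X(s,\omega)=\pi_1 Y(s,\omega)$ is continuous from $[0,t]$ into $E$ with $\sup_{0\leq s\leq t}\n X(s,\omega)\n_E\leq\sup_{0\leq s\leq t}\n Y(s,\omega)\n_{\mathcal{E}^p(E)}$. Taking $q$-th moments and using $Y\in L^q(\Omega;C([0,t];\mathcal{E}^p(E)))$ yields $\E\sup_{0\leq s\leq t}\n X(s)\n_E^q<\infty$, i.e.\ $X\in L^q(\Omega;C([0,t];E))$. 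Since $L^q(\Omega;C([0,t];E))$ embeds into $L^\infty(0,t;L^q(\Omega;E))$, this $X$ is a strong solution of \eqref{SDE} lying in the class in which Corollary~\ref{c:SDEexist} asserts uniqueness (take $r=\infty$); hence it coincides with the strong solution furnished there, and the proof is complete.

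I expect the proof to be essentially mechanical, all the analytic content already residing in Theorems~\ref{t:regSDCP}, \ref{t:Dvarcons} and~\ref{t:repr}. The only point that requires care is the last one --- the identification of the continuous process $X=\pi_1 Y$ constructed through \eqref{SDCP} with the solution produced by Corollary~\ref{c:SDEexist} --- but this is immediate from the uniqueness statement of that corollary. One could instead start from the given solution $X$ of Corollary~\ref{c:SDEexist}, form $Y:=[X,X_\cdot]^T$ and invoke Theorem~\ref{t:repr}\eqref{repr1}; that route additionally requires checking $Y\in L^\infty_{\F}(0,t;L^q(\Omega;\mathcal{E}^p(E)))$ via a Minkowski/H\"older estimate on $s\mapsto\n X_s\n_{L^p(-1,0;E)}$ of the kind used in Remark~\ref{r:intinWp}, which is why the route above is preferable.
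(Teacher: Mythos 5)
Your proposal is correct and follows essentially the same route as the paper, which obtains the corollary precisely by combining the path continuity of the (SDCP) solution from Theorem~\ref{t:regSDCP} with the equivalence Theorem~\ref{t:repr} and reading off $X=\pi_1 Y$. The extra details you supply (the contractivity of $\pi_1$ and the identification with the solution of Corollary~\ref{c:SDEexist} via its uniqueness statement) are exactly the routine steps the paper leaves implicit.
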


\begin{remark}\label{r:strongDPZ} One cannot hope to obtain a strong solution to \eqref{SDCP} as defined in the monograph of Da Prato and Zabczyk \cite{DaPratoZabczyk}, i.e.\ a process $Y$ such that $Y(t)\in D(\mathcal{A})$ a.s.\ for all $t\geq 0$ and
\begin{align*}
Y(t)-\vectwo{x_0}{f_0}=\int_{0}^{t}\mathcal{A}Y(s)ds + \int_{0}^{t}\mathcal{B}(Y(s))dW_H(s)\quad \textrm{a.s.\ for all }t\geq 0,
\end{align*}
unless the problem is deterministic, because of the following:
\begin{prop}
Let $E=\R$. If a generalized strong solution $Y$ to \eqref{SDCP} satisfies $Y(s)\in D(\mathcal{A})$ a.s.\ for all $s\in[0,t]$ then $\mathcal{T}(s)[x_0,f_0]^T\in Null(\mathcal{B})$ and $Y(s)=\mathcal{T}(s)[x_0,f_0]^T$ a.s.\ for almost all $s\in [0,t]$, i.e. \eqref{SDCP} is deterministic.
\end{prop}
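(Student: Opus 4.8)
The plan is to transfer everything to the scalar component $X := \pi_1 Y$ (with $X|_{[-1,0)} := f_0$). By the second part of Theorem~\ref{t:repr} this $X$ is a strong solution of \eqref{SDE}, so, writing $A=a\in\R$ and $C$ for the scalar Riemann--Stieltjes functional, one has a.s.\ for all $t\geq0$
\[
X(t) - x_0 = a\int_0^t X(s)\,ds + C\!\int_0^t X_s\,ds + M(t), \qquad M(t):=\int_0^t B(X(s),X_s)\,dW_H(s).
\]
Since $E=\R$ one has $\gamma(H,\R)=H$ and $\gamma(0,t;H,\R)=L^2(0,t;H)$, so $M$ is a genuine real-valued continuous local martingale with quadratic variation $[M]_t=\int_0^t\|B(X(s),X_s)\|_H^2\,ds$. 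Everything will follow once we show $M\equiv0$.

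The first key step is to exploit that $Y(s)\in D(\mathcal{A})$ forces $\pi_2Y(s)=X_s\in W^{1,p}(-1,0)$. Taking $s$ in a countable dense subset of $[0,t]$ that contains $0$, we obtain a.s.\ that $X_s\in W^{1,p}(-1,0)$ for all such $s$; since the intervals $[s-1,s]$ overlap and cover $(-1,t]$, and since the piece for $s=0$ is precisely $f_0$, gluing these pieces yields (a.s.) $X\in W^{1,p}(-1,t)$. Hence $X$ has an absolutely continuous version on $[-1,t]$, and each term of the drift $x_0+a\int_0^\cdot X(s)\,ds+C\int_0^\cdot X_s\,ds$ is absolutely continuous in $t$ --- for the last term because $s\mapsto X_s$ is then Bochner integrable into $W^{1,p}(-1,0)$ and $C$ is bounded, so $C\int_0^tX_s\,ds=\int_0^tCX_s\,ds$. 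Therefore $M(t)=X(t)-(\text{drift})(t)$ is a continuous local martingale whose paths are of bounded variation, and such a process is a.s.\ constant; as $M(0)=0$ we conclude $M\equiv0$ a.s.

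From $M\equiv0$ we get $[M]_t=\int_0^t\|B(X(s),X_s)\|_H^2\,ds=0$ a.s.\ for every $t$, hence $B(X(s),X_s)=0$, so $\mathcal{B}(Y(s))=[B(X(s),X_s),0]^T=0$, for almost every $s$, a.s. Substituting this into the identity defining a generalized strong solution of \eqref{SDCP} leaves $Y(t)-Y_0=\mathcal{A}\int_0^tY(s)\,ds$ for all $t$, i.e.\ $Y$ is a generalized strong solution of the \emph{deterministic} abstract Cauchy problem for $\mathcal{A}$; by uniqueness for that problem (standard $C_0$-semigroup theory) one gets $Y(s)=\mathcal{T}(s)Y_0$ a.s.\ for almost every $s\in[0,t]$. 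Combining this with $\mathcal{B}(Y(s))=0$ for a.e.\ $s$ yields $\mathcal{B}(\mathcal{T}(s)[x_0,f_0]^T)=0$ for a.e.\ $s$, a.s., which together with $Y(s)=\mathcal{T}(s)[x_0,f_0]^T$ a.e.\ is exactly the claim; the process then carries no noise, i.e.\ \eqref{SDCP} is deterministic.

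I expect the main obstacle to be the regularity bookkeeping in the second paragraph: upgrading the pointwise (in $\omega$ and in $s$) membership $Y(s)\in D(\mathcal{A})$ to the statement, on a single full-measure event, that $X\in W^{1,p}(-1,t)$, whence the drift has paths of bounded variation in $t$. This requires the interplay between the spatial Sobolev regularity $X_s\in W^{1,p}(-1,0)$ and the time regularity of $X$, the consistency of the local $W^{1,p}$ pieces, and some care near $s=0$ (where one uses that $s=0$ is covered by the hypothesis, so $f_0\in W^{1,p}(-1,0)$ a.s.). The remaining inputs --- that a continuous local martingale of bounded variation is a.s.\ constant, and uniqueness for the deterministic abstract Cauchy problem --- are standard.
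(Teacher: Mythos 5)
Your argument is correct and is essentially the paper's own proof: both pass to $X=\pi_1 Y$ via Theorem \ref{t:repr}, use $Y(s)\in D(\mathcal{A})$ (i.e.\ $X_s\in W^{1,p}$) to conclude the stochastic integral has paths of bounded variation, invoke the fact that a continuous local martingale of bounded variation is constant (the paper cites Problem 1.5.11 in \cite{KaratzasShreve}) to get $\mathcal{B}(Y(s))=0$ a.e., and then identify $Y$ with $\mathcal{T}(\cdot)[x_0,f_0]^T$ by uniqueness for the deterministic Cauchy problem. The only difference is that you spell out the gluing of the local $W^{1,p}$ pieces over a countable dense set of $s$, a bookkeeping point the paper passes over silently.
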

\begin{proof}
Define $X:=\pi_1(Y)$, then $X$ is a generalized strong solution to $\eqref{SDE}$ by Theorem \ref{t:repr}. If $Y(s)\in D(\mathcal{A})$ for all $s\in[0,t]$ a.s.\ then $X\in W^{1,p}(0,t)$ a.s., i.e.\ by Lemma \ref{l:intex} the process $I(\mathcal{B}(Y)):[0,t]\times \Omega \rightarrow \mathbb{R}$ defined by $I(\mathcal{B}(Y))(s)= \int_{0}^{s}\mathcal{B}(Y(u)) dW_H(u)$ is in $W^{1,p}(0,t)$ a.s. Recall that the quadratic variation of $I(\mathcal{B}(Y))$ is given by 
\begin{align*}
V^2_t(I(\mathcal{B}(Y)) &= \int_0^{t} \mathcal{B}^2(Y(s))ds,
\end{align*}
and hence by Problem 1.5.11 in \cite{KaratzasShreve}  the process $I(\mathcal{B}(Y))$ can only be of bounded variation (and hence only possibly in $W^{1,p}(0,t)$) on the set
\begin{align*}
&\left\{\omega\in\Omega\,:\, \int_{0}^{t}\mathcal{B}^2(Y(s,\omega)) ds =0 \right\}\\
&\qquad =\left\{\omega\in\Omega\,:\, Y(s,\omega)\in Null(\mathcal{B})\textrm{ for almost all }s\in [0,t] \right\}.
\end{align*}
Thus if $I(\mathcal{B}^2(Y(s)))$ is to be in $W^{1,p}(0,t)$ a.s.\ then one has
\begin{align*}
Y(s) - \vectwo{x_0}{f_0} &= \mathcal{A}\int_{0}^{s} Y(u) du \quad \textrm{a.s.\ for all }s\in[0,t],
\end{align*}
which implies that $Y(s)=\mathcal{T}(s)[x_0,f_0]^T$ and $\mathcal{T}(s)[x_0,f_0]^T\in Null(\mathcal{B})$ a.s.\ for all $s\in [0,t]$.
\end{proof}
\end{remark}
\begin{remark} \label{r:invmeasure}
We can use Theorem \ref{t:repr} to find a stationary solution to \eqref{SDE} with additive noise, i.e. $B(Y(s))=b\in \gamma(H,E)$. It follows from Proposition 4.4 in \cite{vanNeervenWeis_invmeasure} that in this case \eqref{SDCP} admits invariant measure if and only if the function $$t\mapsto \mathcal{T}(t)[b,0]^T$$ represents an element of 
$\gamma(0,\infty;H,\mathcal{E}^p(E))$. By Lemma \ref{l:LpGammaFub}, embedding \eqref{emb} and equality \eqref{propT}
 this is the case if $\pi_1 \mathcal{T}(t)[b,0]^T \in L^2(0,\infty;\gamma(H,E))$, i.e.\ in particular if $(\mathcal{T}(t))_{t\geq 0}$ is exponentially stable.
\end{remark}
\section*{Acknowledgments}
The authors wish to thank Anna Chojnowska-Michalik, Jan van Neerven and Mark Veraar for helpful comments.

\end{document}